\newcommand{\tagarray}{\mbox{}\refstepcounter{equation}$(\theequation)$}
\newenvironment{mathprooftree}
  {\varwidth{.9\textwidth}\centering\leavevmode}
  {\DisplayProof\endvarwidth}
\DeclarePairedDelimiter\ket{\lvert}{\rangle}
\DeclarePairedDelimiterX\braket[2]{\langle}{\rangle}{#1 \delimsize\vert #2}
\DeclarePairedDelimiterX\inner[2]{\langle}{\rangle}{#1,#2}
\def\drawbang{\draw[color=teal!50, line width=2pt]}
\def\drawprom{\draw[color=gray, line width=3pt]}
\def\bluenode{\node[circle,draw=blue!50,fill=blue!20]}
\def\mapnode{\node[circle,draw=black,fill=black,inner sep=0.5mm]}
\def\dernode{\node[circle,draw=black,fill=white]}
\definecolor{Myblue}{rgb}{0,0,0.6}
\newtheorem{theorem}{Theorem}[section]
\newtheorem{lemma}[theorem]{Lemma}
\newtheoremstyle{example}{\topsep}{\topsep}
	{}
	{}
	{\bfseries}
	{.}
	{2pt}
	{\thmname{#1}\thmnumber{ #2}\thmnote{ #3}}
	\theoremstyle{example}
	\newtheorem{definition}[theorem]{Definition}
	\newtheorem{example}[theorem]{Example}
	\newtheorem{remark}[theorem]{Remark}
\def\res{\operatorname{Res}}
\def\Hom{\operatorname{Hom}}
\def\vacu{\ket{\emptyset}}
\DeclareMathOperator{\End}{End}
\DeclareMathOperator{\Spec}{Spec}
\DeclareMathOperator{\Sym}{Sym}
\DeclareMathOperator{\ILL}{ILL}
\def\inta{\bold{int}}
\begin{document}

\def\ScoreOverhang{1pt}

\def\Res{\res\!}
\newcommand{\ud}[1]{\operatorname{d}\!{#1}}
\newcommand{\Ress}[1]{\res_{#1}\!}
\newcommand{\cat}[1]{\mathcal{#1}}
\newcommand{\lto}{\longrightarrow}
\newcommand{\xlto}[1]{\stackrel{#1}\lto}
\newcommand{\mf}[1]{\mathfrak{#1}}
\newcommand{\md}[1]{\mathscr{#1}}
\newcommand{\church}[1]{\underline{#1}}
\newcommand{\prf}[1]{\underline{#1}}
\newcommand{\den}[1]{\llbracket #1 \rrbracket}
\def\l{\,|\,}
\def\sgn{\textup{sgn}}
\def\cont{\operatorname{cont}}

\title{Logic and linear algebra: an introduction}
\author{Daniel Murfet}

\maketitle

\begin{abstract} We give an introduction to logic tailored for algebraists, explaining how proofs in linear logic can be viewed as algorithms for constructing morphisms in symmetric closed monoidal categories with additional structure. This is made explicit by showing how to represent proofs in linear logic as linear maps between vector spaces. The interesting part of this vector space semantics is based on the cofree cocommutative coalgebra of Sweedler.
\end{abstract}

\setlength{\epigraphwidth}{0.6\textwidth}
\epigraph{\emph{A contrario}, for intuitionists, \emph{Modus Ponens} is not a legal advisor, it is the door open on a new world, it is the application of a function $(A \Rightarrow B)$ to an argument $(A)$ yielding a result $(B)$. Proofs are no longer those sequences of symbols created by a crazy bureaucrat, they are functions, \emph{morphisms}.}{Jean-Yves Girard, \textit{The Blind Spot}}

\tableofcontents

\section{Introduction}

Logic is familiar to mathematicians in other fields primarily as the study of the \emph{truth} of mathematical propositions, but there is an alternative tradition which views logic as being about the structure of the \emph{collection of proofs}. The contrast between these points of view can be explained by an analogy (not perfect, but perhaps helpful) between propositions in logic and differential equations. In this analogy the truth of a proposition corresponds to the \emph{existence} of a solution to a differential equation, and the set of proofs of a proposition plays the role of the space of solutions. There is often a great deal of redundancy in the space of solutions, in the sense that two solutions which are apparently different may be related by a symmetry of the underlying manifold, and thus are in essence ``the same''. Moreover there are examples, such as the Seiberg-Witten equations, where the moduli space obtained by identifying solutions related by symmetries is a compact manifold, whose ``finite'' geometry is the key to understanding the content of the original equations.

There was an analogous phenomenon at the founding of formal symbolic logic, when a central problem was to establish the consistency of arithmetic. While working on this problem Gentzen discovered a new style of presenting proofs, called the \emph{sequent calculus}, in which the set of proofs of any arithmetic proposition is revealed to contain a great deal of redundancy: many apparently different proofs actually have ``the same'' logical content. This redundancy has its origin in applications of \emph{Modus Ponens}, which is known in sequent calculus as the \emph{cut rule}. This rule creates indirection and implicitness in proofs, by making theorems depend on lemmas (whose proofs are somewhere else, hence the indirection). However this implicitness can be ``explicated'' without essentially changing the content of the proof; this deep result is known as Gentzen's \emph{Hauptsatz}. Identifying proofs related by this explicitation (called \emph{cut-elimination}) yields the much more tractable set of \emph{cut-free proofs}, the analogue of the compact manifold of solutions modulo symmetry. By reducing consistency to a problem about cut-free proofs, where it is trivial, Gentzen was able to prove the consistency of arithmetic.

\vspace{0.3cm}

The purpose of this article is to introduce the reader to this alternative tradition of logic, with its emphasis on the structure and symmetries of collections of proofs. We will do this using intuitionistic linear logic and its semantics in vector spaces and linear maps. Here the word ``semantics'' has a meaning close to what we mean by \emph{representation} in algebra: the structure of linear logic is encoded in its connectives, deduction rules, and cut-elimination transformations, and some insight into this structure can be gained by mapping it in a structure-preserving way to linear operators on vector spaces.

We begin in Section \ref{section:sketch} with some examples of proofs in linear logic, and how they can be viewed as algorithms for constructing morphisms in symmetric closed monoidal categories (omitting all details). In order to justify this we recall in Section \ref{section:lambda_calc} the formalisation of the notion of algorithm in the context of the $\lambda$-calculus, an archetypal programming language. In Section \ref{section:intro_ll} we define linear logic and revisit the examples from Section \ref{section:sketch} before turning in Section \ref{section:diagrammatics} to the semantics in vector spaces. In Section \ref{section:cut_elim} we discuss cut-elimination, and then in Section \ref{section:second} second-order linear logic. In Appendix \ref{section:appendix_cut_elim} we give a detailed example of cut-elimination and in Appendix \ref{section:example_lifting} we examine tangent maps in connection with proofs.

Most of what we have to say is well-known, with the exception of some aspects of the vector space semantics in Section \ref{section:vector_space_sem}. There are many aspects of logic and its connections with other subjects that we cannot cover: for great introductions to proof theory and the history of the subject see \cite[Chapter 1]{girard_prooftypes} and \cite[\S 1]{mellies}, and for a well-written account of analogies between logic, topology and physics see \cite{baez}.
\\

\emph{Acknowledgements.} Thanks to Nils Carqueville, Jesse Burke, Greg Restall, Shawn Standefer, Kazushige Terui and Andante.
\\

\section{A sketch of linear logic proofs as algorithms}\label{section:sketch}

Linear logic was introduced by Girard in the 1980s \cite{girard_llogic} and it has been the subject of active research ever since, in both computer science and mathematical logic. There is a close connection between linear logic and algebra, which at its root is linguistic: symmetric closed monoidal categories are ubiquitous in algebra, and their formal language is a subset of linear logic. Another way to say this is that linear logic provides a language for defining \emph{algorithms} which construct morphisms in closed symmetric monoidal categories.

For example, writing $(-) \multimap (-)$ for the internal Hom in a symmetric closed monoidal category $\cat{C}$, there is for any triple of objects $a,b,c \in \cat{C}$ a canonical map
\begin{equation}\label{eq:comp_map_intro}
(a \multimap b) \otimes (b \multimap c) \lto a \multimap c
\end{equation}
which is the internal notion of composition. It is derived from the structure of the category $\cat{C}$ in the following way: from the evaluation maps
\[
e_{a,b}: a \otimes (a \multimap b) \lto b, \qquad e_{b,c}: b \otimes (b \multimap c) \lto c
\]
and the adjunction between internal Hom and tensor we obtain a map
\begin{equation}
\xymatrix{
\Hom_{\cat{C}}( c, c ) \ar[d]^{ \Hom( e_{b,c}, 1 ) }\\
\Hom_{\cat{C}}( b \otimes (b \multimap c), c) \ar[d]^{ \Hom(e_{a,b} \otimes 1,1) }\\
\Hom_{\cat{C}}(a \otimes (a \multimap b) \otimes (b \multimap c),  c ) \ar[d]^{\cong \quad \textup{adjunction}}\\
\Hom_{\cat{C}}((a \multimap b) \otimes (b \multimap c), a \multimap c )\,.
} \label{eq:construct_comp_catt}
\end{equation}
The image of the identity on $c$ under this map is the desired composition. 

This construction is formal, in the sense that it does not depend on the nature of the particular objects $a,b,c$. The formality can be made precise by presenting the same construction using the language of linear logic:
\begin{equation}
\begin{mathprooftree}
\AxiomC{}
\UnaryInfC{$A \vdash A$}
\AxiomC{}
\UnaryInfC{$B \vdash B$}
\AxiomC{}
\UnaryInfC{$C \vdash C$}
\RightLabel{\scriptsize$\multimap L$}
\BinaryInfC{$B, B \multimap C \vdash C$}
\RightLabel{\scriptsize$\multimap L$}
\BinaryInfC{$A, A \multimap B, B \multimap C \vdash C$}
\RightLabel{\scriptsize$\multimap R$}
\UnaryInfC{$A \multimap B, B \multimap C \vdash A \multimap C$}
\end{mathprooftree}
\end{equation}
This syntactical object is called a \emph{proof}. Here $A,B,C$ are formal variables that we can think of as standing for unknown objects of $\cat{C}$ (or in fact any symmetric closed monoidal category) and $\multimap$ is a connective called linear implication. The horizontal lines stand for deduction rules. If we choose to specialise the variables of the proof to particular objects $a,b,c$ the proof acquires a shadow or interpretation in $\cat{C}$, namely, the internal composition \eqref{eq:comp_map_intro}. 

We will formally define linear logic proofs in Section \ref{section:intro_ll} and the method by which their interpretations are defined in Section \ref{section:diagrammatics}, but even without having seen any of the definitions it should seem plausible that the proof formalises the construction in \eqref{eq:construct_comp_catt}. For example the deduction rule $\multimap L$ corresponds to precomposition with an evaluation map, and $\multimap R$ to the use of adjunction. This sketch indicates how linear logic provides algorithms for the construction of canonical maps in symmetric closed monoidal categories. 
\\




Happily, symmetric closed monoidal categories are the \emph{least} interesting part of linear logic. Here is an example of an algorithm more interesting than composition: take as input an object $a$ and an endomorphism $f: a \lto a$, and return as output the square $f \circ f$. In a generic symmetric closed monoidal category there is no element in $\Hom_{\cat{C}}(a \multimap a, a \multimap a)$ which represents this algorithm internally to $\cat{C}$, in the way that we saw with composition (why?). However, this operation \emph{is} described by a proof in linear logic, namely:
\begin{equation}\label{church_2_intro}
\begin{mathprooftree}
\AxiomC{}
\UnaryInfC{$A \vdash A$}
\AxiomC{}
\UnaryInfC{$A \vdash A$}
\AxiomC{}
\UnaryInfC{$A \vdash A$}
\RightLabel{\scriptsize$\multimap L$}
\BinaryInfC{$A, A \multimap A \vdash A$}
\RightLabel{\scriptsize$\multimap L$}
\BinaryInfC{$A, A \multimap A, A \multimap A \vdash A$}
\RightLabel{\scriptsize$\multimap R$}
\UnaryInfC{$A \multimap A, A \multimap A \vdash A \multimap A$}
\RightLabel{\scriptsize der}
\UnaryInfC{$!( A \multimap A ), A \multimap A \vdash A \multimap A$}
\RightLabel{\scriptsize der}
\UnaryInfC{$!( A \multimap A), !( A \multimap A) \vdash A \multimap A$}
\RightLabel{\scriptsize ctr}
\UnaryInfC{$!( A \multimap A) \vdash A \multimap A$}
\end{mathprooftree}
\end{equation}
This proof involves a new connective $!$ called the \emph{exponential}, but we recognise that until the fourth line, this is the earlier proof with $A$ substituted for $B,C$. Since that proof was presenting the operation of composition, it only takes a small leap of imagination to see \eqref{church_2_intro} as an algorithm which takes $f: a \lto a$ and duplicates it, feeding $(f,f)$ into the composition operation to obtain $f \circ f$. Our intention in the remainder of this note is to buttress this leap of imagination with some actual mathematics, beginning with the definition of algorithms in the next section.

\section{Programs, algorithms and the $\lambda$-calculus}\label{section:lambda_calc}

One of the great achievements of 20th century mathematics was to formalise the idea of an \emph{algorithm} or \emph{program}. Around the same time as Turing defined his machines, Church gave a different formalisation of the idea using the $\lambda$-calculus \cite{church,selinger}. The two definitions are equivalent in that that they identify the same class of computable functions $\mathbb{N} \lto \mathbb{N}$, but the $\lambda$-calculus is more natural from the point of category theory, and it serves as the theoretical underpinning of functional programming languages like Lisp \cite{mccarthy} and Haskell. Intuitively, while Turing machines make precise the concept of \emph{logical state} and \emph{state transition}, the $\lambda$-calculus captures the concepts of \emph{variables} and \emph{substitution}. 

The $\lambda$-calculus is determined by its terms and a rewrite rule on those terms. The terms are to be thought of as algorithms or programs, and the rewrite rule as the method of execution of programs. A \emph{term} in the $\lambda$-calculus is either one of a countable number of variables $x,y,z,\ldots$ or an expression of the type
\begin{equation}
(M \; N) \quad \text{or} \quad (\lambda x\,.\, M)
\end{equation}
where $M,N$ are terms and $x$ is any variable. The terms of the first type are called \emph{function applications} while those of the second type are called \emph{lambda abstractions}. An example of a term, or program, that will be important throughout this note is
\begin{equation}
T := ( \lambda y \,.\, ( \lambda x \,.\, (y \,(y \; x))))\,.
\end{equation}
Note that the particular variables chosen are not important, but the pattern of occurrences of the \emph{same} variable certainly is. That is to say, we deal throughout with terms up to an equivalence relation called $\alpha$-conversion which allows us to rename variables in a consistent way. For example $T$ is equivalent to the term $( \lambda z \,.\, ( \lambda t \,.\, (z \,(z \; t))))$.

If we are supposed to think of $T$ as a program, we must describe what this program \emph{does}. The dynamic content of the $\lambda$-calculus arises from a rewrite rule called \emph{$\beta$-reduction} generated by the following basic rewrite rule
\begin{equation}\label{eq:basic_beta_reduction}
( (\lambda x \,.\, M)\, N) \longrightarrow_\beta M[N/x]
\end{equation}
where $M,N$ are terms, $x$ is a variable, and $M[N/x]$ denotes the term $M$ with all free occurrences of $x$ replaced by $N$.\footnote{There is a slight subtlety here since we may have to rename variables in order to avoid free variables in $N$ being ``captured'' as a result of this substitution, see \cite[\S 2.3]{selinger}.} We write $A \rightarrow_\beta B$ if the term $B$ is obtained from $A$ by rewriting a sub-term of $A$ according to the rule \eqref{eq:basic_beta_reduction}. The smallest reflexive, transitive and symmetric relation containing $\rightarrow_\beta$ is called \emph{$\beta$-equivalence}, written $M =_{\beta} N$ \cite[\S 2.5]{selinger}.

We think of the lambda abstraction $(\lambda x \,.\, M)$ as a program with input $x$ and body $M$, so that the $\beta$-reduction step in \eqref{eq:basic_beta_reduction} has the interpretation of our program being fed the input term $N$ which is subsequently bound to $x$ throughout $M$. A term is \emph{normal} if there are no sub-terms of the type on the left hand side of \eqref{eq:basic_beta_reduction}. In the $\lambda$-calculus computation occurs when two terms are coupled by a function application in such a way as to create a term which is not in normal form: then $\beta$-reductions are performed until a normal form (the output of the computation) is reached.\footnote{Not every $\lambda$-term may be reduced to a normal form by $\beta$-reduction because this process does not necessarily terminate. However if it does terminate then the resulting reduced term is canonically associated to the original term; this is the content of Church-Rosser theorem \cite[\S 4.2]{selinger}.}

In terms of the rewriting of terms generated by $\beta$-reduction, let us now examine what the program $T$ does when fed another term. For a term $M$, we have
\begin{equation}\label{eq:beta_reduc_dup}
(T \, M) = (( \lambda y \,.\, ( \lambda x \,.\, (y \,(y \; x))))\, M) \longrightarrow_\beta (\lambda x \, . \, (M \, (M \; x)))\,.
\end{equation}
Thus $(T \, M)$ behaves like the square of $M$, in the sense that it is a program which takes a single input $x$ and returns $(M \, (M \; x))$. For this reason $T$ is the incarnation of the number $2$ in the $\lambda$-calculus, and it is referred to as a \emph{Church numeral} \cite[\S 3.2]{selinger}.

We have now defined the $\lambda$-calculus and described our basic example of a program, the Church numeral $T$. From the descriptions we have given of their behaviour, it should not be surprising that this algorithm is closely related to the proof \eqref{church_2_intro} in the previous section, which we described there as an algorithm for taking a morphism $f: a \lto a$ in a symmetric closed monoidal category and squaring it. Next we formally define linear logic and explain in more detail the relationship between $T$ and that proof.

\section{Linear logic}\label{section:intro_ll}

\setlength{\epigraphwidth}{0.8\textwidth}
\epigraph{Linear Logic is based on the idea of resources, an idea violently negated by the contraction rule. The contraction rule states precisely that a resource is potentially infinite, which is often a sensible hypothesis, but not always. The symbol $!$ can be used precisely to distinguish those resources for which there are no limitations. From a computational point of view $!A$ means that the datum $A$ is stored in the memory and may be referenced an unlimited number of times. In some sense, $!A$ means forever.}
{J.-Y.~Girard, A.~Scedrov, P.-J.~Scott, \textit{Bounded linear logic}}


There are two main styles of formal mathematical proofs: Hilbert style systems, which most mathematicians will be exposed to as undergraduates, and natural deduction or sequent calculus systems, which are taught to students of computer science. What these two styles share is that they are about propositions (or sequents) and their proofs, which are constructed from axioms via deduction rules. The differences lie in the way that proofs are formatted and manipulated as objects on the page. In this note we will consider only the sequent calculus style of logic, since it is more naturally connected to category theory.

In \emph{intuitionistic linear logic}\footnote{We define $\ILL$ to be first-order intuitionistic linear logic without additives, and generally refer to this simply as ``linear logic'' with the exception of Section \ref{section:second} where we add quantifiers.} ($\ILL$) there are countably many propositional variables $x,y,z,\ldots$, two binary connectives $\multimap$ (linear implication), $\otimes$ (tensor) and a single unary connective $!$ (the exponential). There is a single constant $1$. The set of \emph{formulas} is defined recursively as follows: any variable or constant is a formula, and if $A,B$ are formulas then
\[
A \multimap B, \qquad A \otimes B, \qquad {!}A
\]
are formulas. An important example of a formula is $\inta_A$ defined for any formula $A$ as
\begin{equation}\label{defn:integers} 
\inta_A = {!}( A \multimap A) \multimap (A \multimap A)\,.
\end{equation}
For reasons that will become clear, $\inta_A$ is referred to the type of \emph{integers on $A$} (throughout \emph{type} is used as a synonym for formula). A \emph{sequent} is an expression of the form
\[
A_1,\ldots,A_n \vdash B
\]
with formulas $A_1,\ldots,A_n, B$ of the logic connected by a \emph{turnstile} $\vdash$. The intuitive reading of this sequent is the proposition that $B$ may be deduced from the hypotheses $A_1,\ldots,A_n$. The letters $\Gamma, \Delta$ are used to stand for arbitrary sequences of formulas, possibly empty. A \emph{proof} of a sequent is a series of deductions, beginning from tautologous \emph{axioms} of the form $A \vdash A$, which terminates with the given sequent. At each step of the proof the deduction must follow a list of \emph{deduction rules}. 

More precisely, let us define a \emph{pre-proof} to be a rooted tree whose edges are labelled with sequents. In order to follow the logical ordering, the tree is presented with its root vertex (the sequent to be proven) at the bottom of the page, and we orient edges towards the root (so downwards). The labels on incoming edges at a vertex are called \emph{hypotheses} and on the outgoing edge the \emph{conclusion}. For example consider the following tree, and its equivalent presentation in sequent calculus notation:
\begin{center}
\begin{tabular}{ >{\centering}m{5cm} >{\centering}m{5cm}}
\begin{tikzpicture}
[inner sep=0.5mm,scale=0.6,auto,place/.style={circle,draw=blue!50,fill=blue!20,thick},
transition/.style={circle,draw=black,fill=black}]
\node (topl) at (-2,2) {$\Gamma \vdash A$};
\node (topr) at (2,2) {$\Delta \vdash B$};
\node (bottomm) at (0,-2) {$\Gamma, \Delta \vdash A \otimes B$};
\node (o) at (0,0) [transition] {};
\draw (bottomm) -- (o);
\draw (o) -- (topr);
\draw (o) -- (topl);
\end{tikzpicture}
&
\AxiomC{$\Gamma \vdash A$} \AxiomC{$\Delta \vdash B$}
\BinaryInfC{$\Gamma, \Delta \vdash A \otimes B$}
\DisplayProof
\end{tabular}
\end{center}
Leaves are presented in the sequent calculus notation with an empty numerator.

\begin{definition} A \emph{proof} is a pre-proof together with a compatible labelling of vertices by deduction rules. The list of deduction rules is given in the first column of \eqref{deduction_rule_ax} -- \eqref{deduction_rule_right1}. A labelling is \emph{compatible} if at each vertex, the sequents labelling the incident edges match the format displayed in the deduction rule.
\end{definition}

In all deduction rules, the sets $\Gamma$ and $\Delta$ may be empty and, in particular, the promotion rule may be used with an empty premise. In the promotion rule, ${!} \Gamma$ stands for a list of formulas each of which is preceeded by an exponential modality, for example ${!}A_1,\ldots, {!}A_n$. The diagrams on the right are string diagrams and should be ignored until Section \ref{section:diagrammatics}. In particular they are \emph{not} the trees associated to proofs.

\begin{center}
\begin{tabular}{ >{\centering}m{6cm} >{\centering}m{6cm} >{\centering}m{2cm}}
\AxiomC{}
\LeftLabel{(Axiom): }
\UnaryInfC{$A \vdash A$}
\DisplayProof
&
\begin{tikzpicture}[scale=0.6,auto]
\node (top) at (0,1) {$A$};
\node (bottom) at (0,-1) {$A$};
\draw (bottom) -- (top);
\end{tikzpicture}
&
\tagarray{\label{deduction_rule_ax}}
\end{tabular}
\end{center}

\begin{center}
\begin{tabular}{ >{\centering}m{6cm} >{\centering}m{6cm} >{\centering}m{2cm}}
\AxiomC{$\Gamma, A, B, \Delta \vdash C$}
\LeftLabel{(Exchange): }
\UnaryInfC{$\Gamma, B, A, \Delta \vdash C$}
\DisplayProof
&
\begin{tikzpicture}[scale=0.6,auto]
\node (gamma) at (-2,-4) {$\Gamma$};
\node (a) at (1,-4) {$A$};
\node (b) at (-1,-4) {$B$};
\node (delta) at (2,-4) {$\Delta$};
\node (top) at (0,2) {$C$};
\draw (0,0) -- (top);
\bluenode (o) at (0,0) {};
\draw[out=90,in=180] (-2,-2) to (o);
\draw (gamma) -- (-2,-2);
\draw[out=90,in=225] (-1,-1.5) to (o);
\draw[out=90,in=270] (a) to (-1,-1.5);
\draw[out=90,in=315] (1,-1.5) to (o);
\draw[out=90,in=270] (b) to (1,-1.5);
\draw[out=90,in=0] (delta) to (o);
\end{tikzpicture}
&
\tagarray{\label{deduction_rule_ex}}
\end{tabular}
\end{center}

\begin{center}
\begin{tabular}{ >{\centering}m{6cm} >{\centering}m{6cm} >{\centering}m{2cm}}
\AxiomC{$\Gamma \vdash A$} 
\AxiomC{$\Delta', A, \Delta \vdash B$}
\LeftLabel{(Cut): }
\RightLabel{\scriptsize cut}
\BinaryInfC{$\Delta',\Gamma, \Delta \vdash B$}
\DisplayProof
&
\begin{tikzpicture}[scale=0.6,auto]
\node (top) at (0,4) {$B$};
\node (bottom_mid) at (0,-2) {$\Gamma$};
\node (bottomr) at (2,-2) {$\Delta$};
\node (bottoml) at (-2,-2) {$\Delta'$};
\bluenode (1) at (0,0) {};
\bluenode (2) at (0,2) {};
\draw (bottom_mid) -- (1);
\draw[out=90,in=270] (1) to node {$A$} (2);
\draw[out=90,in=0] (bottomr) to (2);
\draw[out=90,in=180] (bottoml) to (2);
\draw (2) -- (top);
\end{tikzpicture}
&
\tagarray{\label{deduction_rule_cut}}
\end{tabular}
\end{center}

\begin{center}
\begin{tabular}{ >{\centering}m{6cm} >{\centering}m{6cm} >{\centering}m{2cm}}
\AxiomC{$\Gamma \vdash A$} \AxiomC{$\Delta \vdash B$}
\LeftLabel{(Right $\otimes$): }
\RightLabel{\scriptsize $\otimes$-$R$}
\BinaryInfC{$\Gamma, \Delta \vdash A \otimes B$}
\DisplayProof
&
\begin{tikzpicture}[scale=0.6,auto]
\node (topl) at (-1,2) {$A$};
\node (topr) at (1,2) {$B$};
\node (bottoml) at (-1,-2) {$\Gamma$};
\node (bottomr) at (1,-2) {$\Delta$};
\bluenode (l) at (-1,0) {};
\bluenode (r) at (1,0) {};
\draw (bottoml) -- (l);
\draw (l) -- (topl);
\draw (bottomr) -- (r);
\draw (r) -- (topr);
\end{tikzpicture}
&
\tagarray{\label{deduction_rule_righttensor}}
\end{tabular}
\end{center}

\begin{center}
\begin{tabular}{ >{\centering}m{6cm} >{\centering}m{6cm} >{\centering}m{2cm}}
\AxiomC{$\Gamma, A, B, \Delta \vdash C$}
\LeftLabel{(Left $\otimes$): }
\RightLabel{\scriptsize $\otimes$-$L$}
\UnaryInfC{$\Gamma, A \otimes B, \Delta \vdash C$}
\DisplayProof
&
\begin{tikzpicture}[scale=0.6,auto]
\node (gamma) at (-2,-4) {$\Gamma$};
\node (mid) at (0,-4) {$A \otimes B$};
\node (delta) at (2,-4) {$\Delta$};
\node (top) at (0,2) {$C$};
\draw (0,0) -- (top);
\bluenode (o) at (0,0) {};
\draw[out=90,in=180] (-2,-2) to (o);
\draw (gamma) -- (-2,-2);
\draw (-1,-1.5) to (1,-1.5);
\draw[out=90,in=225] (-1,-1.5) to (o);
\draw (mid) to (0,-1.5);
\draw[out=90,in=315] (1,-1.5) to (o);
\draw[out=90,in=0] (delta) to (o);
\end{tikzpicture}
&
\tagarray{\label{deduction_rule_lefttensor}}
\end{tabular}
\end{center}

\begin{center}
\begin{tabular}{ >{\centering}m{6cm} >{\centering}m{6cm} >{\centering}m{2cm}}
\AxiomC{$A, \Gamma \vdash B$}
\LeftLabel{(Right $\multimap$): }
\RightLabel{\scriptsize $\multimap R$ }
\UnaryInfC{$\Gamma \vdash A \multimap B$}
\DisplayProof
&
\begin{tikzpicture}[scale=0.6,auto]
\node (topr) at (1,4) {$A \multimap B$};
\bluenode (o) at (1,0) {};
\node (gamma) at (2.5,-4) {$\Gamma$};
\draw[out=90,in=0] (gamma) to (o);
\draw[out=0,in=180] (-1.5,-2) to node {$A$} (o);
\draw[out=180,in=270] (-1.5,-2) to (-3,0);
\draw[out=90,in=180] (-3,0) to (1,2);
\draw (o) to node [swap] {$B$} (1,2);
\draw (1,2) to (topr);
\end{tikzpicture}
&
\tagarray{\label{deduction_rule_righthom}}
\end{tabular}
\end{center}

\begin{center}
\begin{tabular}{ >{\centering}m{6cm} >{\centering}m{6cm} >{\centering}m{2cm}}
\AxiomC{$\Gamma \vdash A$} \AxiomC{$\Delta', B, \Delta \vdash C$}
\LeftLabel{(Left $\multimap$): }
\RightLabel{\scriptsize $\multimap L$ }
\BinaryInfC{$\Delta', \Gamma, A \multimap B, \Delta \vdash C$}
\DisplayProof
&
\begin{tikzpicture}[scale=0.6,auto]
\node (topr) at (0,2) {$C$};
\bluenode (q) at (-1.5,-4) {};
\bluenode (o) at (0,0) {};
\node (delta) at (3.5,-6) {$\Delta$};
\node (deltap) at (-3.5,-6) {$\Delta'$};
\node (gamma) at (-1.5, -6) {$\Gamma$};
\node (ab) at (1.5,-6) {$A \multimap B$};
\draw[out=90,in=0] (delta) to (o);
\draw[out=90,in=180] (deltap) to (o);
\draw (o) to (topr);
\draw[out=90,in=270] (0,-2) to node [swap] {$B$} (o);
\draw[out=90,in=270] (gamma) to (q);
\draw[out=90,in=180] (q) to node {$A$} (0,-2);
\draw[out=90,in=0] (ab) to (0,-2);
\end{tikzpicture}
&
\tagarray{\label{deduction_rule_lefthom}}
\end{tabular}
\end{center}

\begin{center}
\begin{tabular}{ >{\centering}m{6cm} >{\centering}m{6cm} >{\centering}m{2cm}}
\AxiomC{$! \Gamma \vdash A$}
\LeftLabel{(Promotion): }
\RightLabel{\scriptsize prom}
\UnaryInfC{$!\Gamma \vdash !A$}
\DisplayProof
&
\begin{tikzpicture}[scale=0.6,auto]
\node (top) at (0,4) {$!A$};
\node (bottom) at (0,-4) {$!\Gamma$};
\bluenode (o) at (0,0) {};
\draw (o) to node {$A$} (0,2);
\drawbang (bottom) -- (0,-2);
\drawbang (0,2) -- (top);
\drawbang (0,-2) -- (o);
\drawprom (0,0) ellipse (2cm and 2cm);
\dernode (bottom) at (0,-2) {};
\end{tikzpicture}
&
\tagarray{\label{deduction_rule_prom}}
\end{tabular}
\end{center}

\begin{center}
\begin{tabular}{ >{\centering}m{6cm} >{\centering}m{6cm} >{\centering}m{2cm}}
\AxiomC{$\Gamma, A, \Delta \vdash B$}
\LeftLabel{(Dereliction): }
\RightLabel{\scriptsize der}
\UnaryInfC{$\Gamma, !A, \Delta \vdash B$}
\DisplayProof
&
\begin{tikzpicture}[scale=0.6,auto]
\node (top) at (0,2) {$B$};
\bluenode (o) at (0,0) {};
\dernode (d) at (0,-2) {};
\node (gamma) at (-2, -4) {$\Gamma$};
\node (delta) at (2, -4) {$\Delta$};
\node (banga) at (0,-4) {$!A$};
\drawbang (banga) -- (d);
\draw (o) -- (top);
\draw[out=90,in=270] (d) to node [swap] {$A$} (o);
\draw[out=90,in=0] (delta) to (o);
\draw[out=90,in=180] (gamma) to (o);
\end{tikzpicture}
&
\tagarray{\label{deduction_rule_der}}
\end{tabular}
\end{center}

\begin{center}
\begin{tabular}{ >{\centering}m{6cm} >{\centering}m{6cm} >{\centering}m{2cm}}
\AxiomC{$\Gamma, !A, !A, \Delta \vdash B$}
\LeftLabel{(Contraction): }
\RightLabel{\scriptsize ctr}
\UnaryInfC{$\Gamma, !A, \Delta \vdash B$}
\DisplayProof
&
\begin{tikzpicture}[scale=0.6,auto]
\node (top) at (0,2) {$B$};
\bluenode (o) at (0,0) {};
\node (gamma) at (-2, -4) {$\Gamma$};
\node (delta) at (2, -4) {$\Delta$};
\node (banga) at (0,-4) {$!A$};
\drawbang (banga) -- (0,-2);
\draw (o) -- (top);
\drawbang[out=0,in=325] (0,-2) to (o);
\drawbang[out=180,in=215] (0,-2) to (o);
\draw[out=90,in=0] (delta) to (o);
\draw[out=90,in=180] (gamma) to (o);
\end{tikzpicture}
&
\tagarray{\label{deduction_rule_contr}}
\end{tabular}
\end{center}

\begin{center}
\begin{tabular}{ >{\centering}m{6cm} >{\centering}m{6cm} >{\centering}m{2cm}}
\AxiomC{$\Gamma, \Delta \vdash B$}
\LeftLabel{(Weakening): }
\RightLabel{\scriptsize weak}
\UnaryInfC{$\Gamma, !A, \Delta \vdash B$}
\DisplayProof
&
\begin{tikzpicture}[scale=0.6,auto]
\node (top) at (0,2) {$B$};
\bluenode (o) at (0,0) {};
\node (gamma) at (-2, -3) {$\Gamma$};
\node (delta) at (2, -3) {$\Delta$};
\node (banga) at (0,-3) {$!A$};
\drawbang (banga) -- (0,-1);
\draw (o) -- (top);
\draw[out=90,in=0] (delta) to (o);
\draw[out=90,in=180] (gamma) to (o);
\node[circle,draw=teal!50,fill=teal!50,inner sep=0.5mm] at (0,-1) {};
\end{tikzpicture}
&
\tagarray{\label{deduction_rule_weak}}
\end{tabular}
\end{center}

\begin{center}
\begin{tabular}{ >{\centering}m{6cm} >{\centering}m{6cm} >{\centering}m{2cm}}
\AxiomC{$\Gamma, \Delta \vdash A$}
\LeftLabel{(Left $1$): }
\RightLabel{\scriptsize $1$-$L$}
\UnaryInfC{$\Gamma, 1, \Delta \vdash A$}
\DisplayProof
&
\begin{tikzpicture}[scale=0.6,auto]
\node (top) at (0,2) {$B$};
\bluenode (o) at (0,0) {};
\node (gamma) at (-2, -3) {$\Gamma$};
\node (delta) at (2, -3) {$\Delta$};
\node (one) at (0,-3) {$1$};
\draw (o) -- (top);
\draw (gamma) to (-2,-1.5);
\draw[out=90,in=180] (-2,-1.5) to (o);
\draw[out=90,in=0] (delta) to (o);
\draw[out=90,in=0] (one) to (-2,-1.5);
\end{tikzpicture}
&
\tagarray{\label{deduction_rule_left1}}
\end{tabular}
\end{center}

\begin{center}
\begin{tabular}{ >{\centering}m{6cm} >{\centering}m{6cm} >{\centering}m{2cm}}
\AxiomC{}
\LeftLabel{(Right $1$): }
\RightLabel{\scriptsize $1$-$R$}
\UnaryInfC{$\vdash 1$}
\DisplayProof
&
\begin{tikzpicture}[scale=0.6,auto]
\node (top) at (0,1) {$1$};
\node (bottom) at (0,-1) {$1$};
\draw (bottom) -- (top);
\end{tikzpicture}
&
\tagarray{\label{deduction_rule_right1}}
\end{tabular}
\end{center}

\begin{example}\label{example:first_occur_2} For any formula $A$ let $\church{2}_A$ denote the proof \eqref{church_2_intro} from Section \ref{section:sketch}, which we repeat here for the reader's convenience:
\begin{center}
\AxiomC{}
\UnaryInfC{$A \vdash A$}
\AxiomC{}
\UnaryInfC{$A \vdash A$}
\AxiomC{}
\UnaryInfC{$A \vdash A$}
\RightLabel{\scriptsize$\multimap L$}
\BinaryInfC{$A, A \multimap A \vdash A$}
\RightLabel{\scriptsize$\multimap L$}
\BinaryInfC{$A, A \multimap A, A \multimap A \vdash A$}
\RightLabel{\scriptsize$\multimap R$}
\UnaryInfC{$A \multimap A, A \multimap A \vdash A \multimap A$}
\RightLabel{\scriptsize der}
\UnaryInfC{$!( A \multimap A ), A \multimap A \vdash A \multimap A$}
\RightLabel{\scriptsize der}
\UnaryInfC{$!( A \multimap A), !(A \multimap A) \vdash A \multimap $}
\RightLabel{\scriptsize ctr}
\UnaryInfC{$!( A \multimap A) \vdash A \multimap A$}
\RightLabel{\scriptsize$\multimap R$}
\UnaryInfC{$\vdash \inta_A$}
\DisplayProof
\qquad
\tagarray{\label{church_2_prooftree}}
\end{center}
We also write $\church{2}_A$ for the proof of ${!}( A \multimap A ) \vdash A \multimap A$ obtained by reading the above proof up to the penultimate line. For each integer $n \ge 0$ there is a proof $\church{n}_A$ of $\inta_A$ constructed along similar lines, see \cite[\S 5.3.2]{girard_llogic} and \cite[\S 3.1]{danos}.
\end{example}

In what sense is this proof an avatar of the number $2$? In the context of the $\lambda$-calculus we appreciated the relationship between the term $T$ and the number $2$ only after we saw how $T$ interacted with other terms $M$ by forming the function application $(T \, M)$ and then finding a normal form with respect to $\beta$-equivalence. 

The analogue of function application in linear logic is the cut rule. The analogue of $\beta$-equivalence is an equivalence relation on the set of proofs of any sequent, which we write as $\rho =_{cut} \delta$ and communicate by saying that $\rho$ and $\delta$ are \emph{equivalent under cut-elimination}. Next we describe how this equivalence relation  gives a dynamic meaning to the proof $\church{2}_A$, while postponing the actual definition of cut-elimination until Section \ref{section:cut_elim}.

\begin{example}\label{example:cutagainst2} Let $\pi$ be any proof of $A \vdash A$ and consider the proof 
\begin{equation}\label{cut_against_2_early}
\begin{mathprooftree}
\AxiomC{$\pi$}
\noLine\UnaryInfC{$\vdots$}
\def\extraVskip{5pt}
\noLine\UnaryInfC{$A \vdash A$}
\RightLabel{\scriptsize $\multimap R$}
\UnaryInfC{$\vdash A \multimap A$}
\RightLabel{\scriptsize prom}
\UnaryInfC{$\vdash {!}(A \multimap A)$}
\def\extraVskip{2pt}
\AxiomC{$\church{2}_A$}
\noLine\UnaryInfC{$\vdots$}
\def\extraVskip{5pt}
\noLine\UnaryInfC{$!(A \multimap A) \vdash A \multimap A$}
\def\extraVskip{2pt}
\RightLabel{\scriptsize cut}
\BinaryInfC{$\vdash A \multimap A$}
\end{mathprooftree}
\end{equation}
We write $\delta \l \rho$ for the cut rule applied with left branch $\rho$ and right branch $\delta$, and $\operatorname{prom}( \pi )$ for the left hand branch of \eqref{cut_against_2_early}, so that the whole proof is denoted $\church{2}_A \l \operatorname{prom}( \pi )$.

Intuitively, the promotion rule ``makes perennial'' the data of the proof $\pi$ and prepares it to be used more than once (that is, in a nonlinear way). This is related to the idea of \emph{storage} in the execution of programs on a physical system. The cut rule is logically the incarnation of Modus Ponens, and from the point of view of categorical semantics is the linguistic antecedent of composition. In the context of \eqref{cut_against_2_early} it plays the role of feeding the perennial version of $\pi$ as input to $\church{2}_A$. After our experience with the $\lambda$-term $T$ it is not a surprise that the above proof is equivalent under cut-elimination to
\begin{equation}\label{cut_against_2_early_3}
\begin{mathprooftree}
\AxiomC{$\pi$}
\noLine\UnaryInfC{$\vdots$}
\def\extraVskip{5pt}
\noLine\UnaryInfC{$A \vdash A$}
\def\extraVskip{2pt}
\AxiomC{$\pi$}
\noLine\UnaryInfC{$\vdots$}
\def\extraVskip{5pt}
\noLine\UnaryInfC{$A \vdash A$}
\def\extraVskip{2pt}
\RightLabel{\scriptsize cut}
\BinaryInfC{$A \vdash A$}
\RightLabel{\scriptsize $\multimap R$}
\UnaryInfC{$\vdash A \multimap A$}
\end{mathprooftree}
\end{equation}
which is essentially the square $\pi \l \pi$.
\end{example}

With $\pi$ playing the role of the term $M$ in Section \ref{section:lambda_calc}, we see the close analogy between the program $T$ of $\lambda$-calculus and the proof $\church{2}_A$ of linear logic, with the cut \eqref{cut_against_2_early} playing the role of $(T \, M)$ and cut-elimination the role of $\beta$-reduction. A beautiful and pregnant insight, which has driven much recent progress on the border of logic and computer science, is that \emph{proofs are algorithms}: using the cut rule and cut-elimination, proofs are revealed in the guise of finite, structured objects deterministically transforming inputs to outputs.
The task of the next several sections is to explain how these algorithms can be \emph{realised} in the context of symmetric closed monoidal categories with additional structure.

\begin{remark} The analogy between proofs and programs is made precise by the \emph{Curry-Howard correspondence} and its many variants \cite[\S 6.5]{selinger} which relates programs in the simply-typed $\lambda$-calculus (and its extensions) to proofs in intuitionistic logic (and its extensions) with cut-elimination playing the role of $\beta$-reduction. We will not go into details here, but there is a chain of embeddings
\[
\big\{\textup{Simply-typed $\lambda$-calculus}\big\} \hookrightarrow \big\{\textup{Intuitionistic logic}\big\} \hookrightarrow \big\{\textup{Linear logic}\big\}
\]
using which we can translate programs in the simply-typed $\lambda$-calculus into proofs of linear logic; see \cite[\S 5.1, \S 5.3]{girard_llogic} and \cite{lafont,abramsky,benton_etal}.
\end{remark}


\section{Semantics of linear logic}\label{section:diagrammatics}

\setlength{\epigraphwidth}{0.8\textwidth}
\epigraph{Denotational semantics originated in the work of Scott and Strachey (1971) and Scott (1976) as an attempt to interpret in a non-trivial way the quotient induced on $\lambda$-terms by $\beta$-equivalence. This amounts to finding an invariant of reduction, a question which may be extended to logical systems enjoying cut-elimination. Since its introduction, denotational semantics has proved to be an absolutely essential tool in computer science and proof theory, providing a wealth of information and insights into the nature of computation and formal proofs.}{P.~Boudes, D.~Mazza, L.~Tortora de Falco, \textit{An Abstract Approach to Stratification in Linear Logic}}

The structure of the set of proofs of a sequent $\Gamma \vdash A$ modulo equivalence under cut-elimination is quite complicated, and one way to understand this structure is to model it in simpler mathematical structures. These models are called \emph{semantics}. A \emph{categorical semantics} of linear logic \cite{mellies, blue_book} assigns to each formula $A$ an object $\den{A}$ of some category and to each proof of $\Gamma \vdash A$ a morphism $\den{\Gamma} \lto \den{A}$ in such a way that two proofs equivalent under cut-elimination are assigned the same morphism; these objects and morphisms are called \emph{denotations}. The connectives of linear logic become structure on the category of denotations, and compatibility with cut-elimination imposes identities relating these structures to one another.

In its general outlines the kind of structure imposed on the category of denotations is clear from the cut-elimination transformations themselves, but there are some subtleties; for a history of the development see \cite{mellies}. The upshot is that to define a categorical semantics the category of denotations must be a closed symmetric monoidal category equipped with a comonad, which is used to model the exponential modality \cite[\S 7]{mellies}. This is a refinement of the equivalence between simply-typed $\lambda$-calculus and cartesian closed categories due to Lambek and Scott \cite{lambek}. 

In this section we present the vector space semantics of linear logic, following \cite[p.5]{hyland}. The denotations of formulas are (infinite-dimensional) vector spaces, and the denotations of proofs are linear maps. We begin by explaining the denotations of formulas in Section \ref{section:denote_formula}. The denotations of proofs is trickier, and we first review the language of string diagrams in Section \ref{section:stringdiag} before giving the semantics and examples in Section \ref{section:vector_space_sem}.

\begin{remark} The first semantics of linear logic were the coherence spaces of Girard \cite[\S 3]{girard_llogic} which are a refined form of Scott's model of the $\lambda$-calculus. Models of full linear logic with negation involve the $\star$-autonomous categories of Barr \cite{barr_auto,barr_acc,barr_autolin} and the extension to include quantifiers involves indexed monoidal categories \cite{seely}.
\end{remark}

\subsection{Denotations of formulas}\label{section:denote_formula}

Let $k$ be an algebraically closed field of characteristic zero. All vector spaces are $k$-vector spaces and $\cat{V}$ denotes the category of vector spaces (not necessarily finite-dimensional) whose tensor product $\otimes_k$ is written $\otimes$.

For a vector space $V$ let ${!} V$ denote the cofree cocommutative coalgebra generated by $V$. We will discuss the explicit form of this coalgebra in the next section; for the moment it is enough to know that it exists, is determined up to unique isomorphism, and there is a linear map $d: {!} V \lto V$ which is universal.

\begin{definition}\label{defn:denotation_objects} The \emph{denotation} $\den{A}$ of a formula $A$ is defined inductively as follows:
\begin{itemize}
\item The propositional variables $x, y, z, \ldots$ are assigned chosen finite-dimensional vector spaces $\den{x}, \den{y}, \den{z}, \ldots$;
\item $\den{1} = k$;
\item $\den{A \otimes B} = \den{A} \otimes \den{B}$;
\item $\den{A \multimap B} = \den{A} \multimap \den{B}$ which is notation for $\Hom_k(\den{A}, \den{B})$;
\item $\den{!A} = {!} \den{A}$.
\end{itemize}
The denotation of a group of formulas $\Gamma = A_1,\ldots,A_n$ is their tensor product
\[
\den{\Gamma} = \den{A_1} \otimes \cdots \otimes \den{A_n}\,.
\]
If $\Gamma$ is empty then $\den{\Gamma} = k$.
\end{definition}

\begin{example}\label{example:denotation_2}  Let $A$ be a formula whose denotation is $V = \den{A}$. Then from \eqref{defn:integers},
\[
\den{\inta_A} = \den{ {!}( A \multimap A) \multimap (A \multimap A)} = \Hom_k( {!} \End_k(V), \End_k(V) )\,.
\]
\end{example}

In order to fortify the reader for some technical material in the next section, let us give a preview of the essential features of the semantics:

\begin{example}\label{example:2_denotation_preview} Let $A$ be a formula and $V = \den{A}$. The denotation of the proof $\church{2}_A$ of $\vdash \inta_A$ from Example \ref{example:first_occur_2} will be a morphism
\begin{equation}\label{eq:2_as_comp_0}
\den{\church{2}_A}: k \lto \den{\inta_A} = \Hom_k( {!} \End_k(V), \End_k(V) )\,,
\end{equation}
or equivalently, a linear map ${!} \End_k(V) \lto \End_k(V)$. What is this linear map? It turns out (see Example \ref{example:church_2} below for details) that it is the composite
\begin{equation}\label{eq:2_as_comp}
\xymatrix@C+0.5pc{
{!} \End_k(V) \ar[r]^-{\Delta} & {!} \End_k(V) \otimes {!} \End_k(V) \ar[r]^-{d \otimes d} & \End_k(V)^{\otimes 2} \ar[r]^-{- \circ -} & \End_k(V)
}
\end{equation}
where $\Delta$ is the coproduct, $d$ is the universal map, and the last map is the composition. How to reconcile this linear map with the corresponding program in the $\lambda$-calculus, which has the meaning ``square the input function''? As we will explain below, for $\alpha \in \End_k( V )$ there is a naturally associated element $\vacu_\alpha \in {!} \End_k(V)$ with the property that
\[
\Delta \vacu_\alpha = \vacu_\alpha \otimes \vacu_\alpha, \qquad d \vacu_\alpha = \alpha\,.
\]
Then $\den{\church{2}_A}$ maps this element to
\begin{equation}\label{eq:2_deals_with_0}
\vacu_\alpha \longmapsto \vacu_\alpha \otimes \vacu_\alpha \longmapsto \alpha \otimes \alpha \longmapsto \alpha \circ \alpha\,.
\end{equation}
This demonstrates how the coalgebra ${!} \End_k(V)$ may be used to encode nonlinear maps, such as squaring an endomorphism.
\end{example}

\subsection{Background on string diagrams and coalgebras}\label{section:stringdiag}


In order to present the denotations of proofs in the vector space semantics we need to firstly present the language of string diagrams following Joyal and Street \cite{JSGoTCI,JSGoTCII,ladia,khovdia,mellies, mellies_dia} and secondly present some material on cofree coalgebras in order to give formulas for the promotion and dereliction rules.



Let $\cat{V}$ denote the category of $k$-vector spaces (not necessarily finite dimensional). Then $\cat{V}$ is symmetric monoidal and for each object $V$ the functor $V \otimes -$ has a right adjoint
\[
V \multimap - := \Hom_k(V, -)\,.
\]
In addition to the usual diagrammatics of a symmetric monoidal category, we draw the evaluation map $V \otimes (V \multimap W) \lto W$ as
\begin{equation}\label{eq:diagram_eval_map}
\begin{tikzpicture}[scale=0.4,auto,baseline=(current  bounding  box.center)]
\node (topr) at (0,3) {$W$};
\coordinate (o) at (0,0) {};
\node (bottoml) at (-3,-4) {$V$};
\node (bottomr) at (3,-4) {$V \multimap W$};
\draw[out=90,in=180] (bottoml) to (o);
\draw[out=90,in=0] (bottomr) to (o);
\draw (o) to (topr);
\end{tikzpicture}
\qquad\qquad
v \otimes \psi \mapsto \psi(v)\,.
\end{equation}
The adjoint $Y \lto X \multimap Z$ of a morphism $\phi: X \otimes Y \lto Z$ is depicted as follows:\footnote{This is somewhat against the spirit of the diagrammatic calculus, since the loop labelled $X$ is not ``real'' and is only meant as a ``picture'' to be placed at a vertex between a strand labelled $Y$ and a strand labelled $X \multimap Z$. This should not cause confusion, because we will never manipulate this strand on its own. The idea is that if $X$ were a finite-dimensional vector space, so that $X \multimap Z \cong X^{\vee} \otimes Z$, the above diagram would be absolutely valid, and we persist with the same notation even when $X$ is not dualisable. In our judgement the clarity achieved by this slight cheat justifies a little valour in the face of correctness.}
\begin{equation}\label{eq:diagram_adjoint_of_map}
\begin{tikzpicture}[scale=0.5,auto,baseline=(current  bounding  box.center)]
\node (topr) at (1,4) {$X \multimap Z$};
\mapnode (o) at (1,0) {};
\node [below] at (o.east) {$\phi$};
\node (gamma) at (2.5,-4) {$Y$};
\draw[out=90,in=0] (gamma) to (o);
\draw[out=0,in=180] (-1.5,-2) to node {$X$} (o);
\draw[out=180,in=270] (-1.5,-2) to (-3,0);
\draw[out=90,in=180] (-3,0) to (1,2);
\draw (o) to node [swap] {$Z$} (1,2);
\draw (1,2) to (topr);
\end{tikzpicture}
\qquad
y \mapsto \{ x \mapsto \phi( x \otimes y ) \}\,.
\end{equation}
Next we present the categorical construct corresponding to the exponential modality in terms of an adjunction, following Benton \cite{benton}, see also \cite[\S 7]{mellies}. Let $\cat{C}$ denote the category of counital, coassociative, cocommutative coalgebras in $\cat{V}$. In this paper whenever we say \emph{coalgebra} we mean an object of $\cat{C}$. This is a symmetric monoidal category in which the tensor product (inherited from $\cat{V}$) is cartesian, see \cite[Theorem 6.4.5]{sweedler}, \cite{barr} and \cite[\S 6.5]{mellies}. 

By results of Sweedler \cite[Chapter 6]{sweedler} the forgetful functor $L: \cat{C} \lto \cat{V}$ has a right adjoint $R$ and we set ${!} = L \circ R$, as in the following diagram:\footnote{The existence of a right adjoint to the forgetful functor can also be seen to hold more generally as a consequence of the adjoint functor theorem \cite{barr}.}
\[
\xymatrix@C+3pc{
\cat{C}\ar@<0.7ex>[r]^-{L} & \cat{V} \ar@<0.7ex>[l]^-{R}
}
\qquad
! = L \circ R\,.
\]
Both $L$ and its adjoint $R$ are monoidal functors.

For each $V$ there is a coalgebra $! V$ and a counit of adjunction $d: {!} V \lto V$. Since this map will end up being the interpretation of the dereliction rule in linear logic, we refer to it as the \emph{dereliction map}. In string diagrams it is represented by an empty circle. Although it is purely decorative, it is convenient to represent coalgebras in string diagrams drawn in $\cat{V}$ by thick lines, so that for ${!} V$ the dereliction, coproduct and counit are drawn respectively as follows:
\begin{equation}\label{eq:coalgebra_maps}
\begin{tikzpicture}[scale=0.5,auto,inner sep=1mm,baseline=(current  bounding  box.center)]
\node (top) at (0,2) {$V$};
\dernode (d) at (0,0) {};
\node (banga) at (0,-2) {$!V$};
\drawbang (banga) -- (d);
\draw (d) -- (top);
\end{tikzpicture}
\qquad\qquad\qquad
\begin{tikzpicture}[scale=0.5,auto,inner sep=1mm,baseline=(current  bounding  box.center)]
\node (topl) at (-1,2) {${!} V$};
\node (topr) at (1,2) {${!} V$};
\coordinate (d) at (0,0);
\node (banga) at (0,-2) {$!V$};
\drawbang (banga) -- (d);
\drawbang[out=0,in=270] (d) to (topr);
\drawbang[out=180,in=270] (d) to (topl);
\end{tikzpicture}
\qquad\qquad\qquad
\begin{tikzpicture}[scale=0.5,auto,inner sep=1mm,baseline=(current  bounding  box.center)]
\node (banga) at (0,-2) {$!V$};
\node[circle,draw=teal!50,fill=teal!50,inner sep=0.5mm] (blah) at (0,1) {};
\drawbang (banga) -- (blah);
\end{tikzpicture}
\end{equation}
In this paper our string diagrams involve both $\cat{V}$ and $\cat{C}$ and our convention is that white regions represent $\cat{V}$ and gray regions stand for $\cat{C}$. A standard way of representing monoidal functors between monoidal categories is using coloured regions \cite[\S 5.7]{mellies}. The image under $L$ of a morphism $\alpha: C_1 \lto C_2$ in $\cat{C}$ is drawn as a vertex in a grey region embedded into a white region. The image of a morphism $\gamma: V_1 \lto V_2$ under $R$ is drawn using a white region embedded in a gray plane. For example, the diagrams representing $L(\alpha), R(\gamma)$ and ${!} \gamma = LR(\gamma)$ are respectively
\begin{center}
\begin{tikzpicture}
[inner sep=0.5mm,scale=0.6,auto,place/.style={circle,draw=blue!50,fill=blue!20,thick},
transition/.style={circle,draw=black,fill=black}]
\node (vtop) at (0,4.5) {$L C_2$};
\node (bottom) at (0,-4.5) {$L C_1$};
\drawbang (top) to (vtop);
\coordinate (bottom_c) at (0,-3);
\drawbang (bottom) to (bottom_c);
\coordinate (top_c) at (0,3);
\draw[color=gray, line width=3pt, fill=gray!20] (0,0) ellipse (2cm and 3cm);
\node (o) at (0,0) [transition] {};
\node [right] at (o.east) {$\alpha$};
\draw (o) to node [swap] {$C_2$} (top_c);
\draw (bottom_c) to node [swap] {$C_1$} (o);
\draw[color=gray, line width=3pt] (0,0) ellipse (2cm and 3cm);
\end{tikzpicture}
\qquad \qquad
\begin{tikzpicture}
[inner sep=0.5mm,scale=0.6,auto,place/.style={circle,draw=blue!50,fill=blue!20,thick},
transition/.style={circle,draw=black,fill=black}]
\draw[color=white,fill=gray!20] (-3,-4.1) rectangle (3,4.1);
\node (vtop) at (0,4.5) {$R V_2$};
\node (bottom) at (0,-4.5) {$R V_1$};
\draw (top) to (vtop);
\coordinate (bottom_c) at (0,-3);
\draw (bottom) to (bottom_c);
\coordinate (top_c) at (0,3);
\draw[color=gray, line width=3pt, fill=white] (0,0) ellipse (2cm and 3cm);
\node (o) at (0,0) [transition] {};
\node [right] at (o.east) {$\gamma$};
\draw (o) to node [swap] {$V_2$} (top_c);
\draw (bottom_c) to node [swap] {$V_1$} (o);
\draw[color=gray, line width=3pt] (0,0) ellipse (2cm and 3cm);
\end{tikzpicture}
\qquad \qquad
\begin{tikzpicture}
[inner sep=0.5mm,scale=0.6,auto,place/.style={circle,draw=blue!50,fill=blue!20,thick},
transition/.style={circle,draw=black,fill=black}]
\node (vtop) at (0,4.5) {${!} V_2 = LR V_2$};
\node (bottom) at (0,-4.5) {${!} V_1 = LR V_1$};
\coordinate (top_c) at (0,3);
\coordinate (bottom_c) at (0,-3);
\drawbang (top) to (vtop);
\drawbang (bottom) to (bottom_c);
\draw[color=gray, line width=3pt, fill=gray!20] (0,0) ellipse (3cm and 3.2cm);
\draw[color=gray, line width=3pt, fill=white] (0,0) ellipse (2.5cm and 1.2cm);
\node (o) at (0,0) [transition] {};
\node [right] at (o.east) {$\gamma$};
\draw (o) to (0,1.2);
\draw (o) to (0,-1.2);
\draw (0,1.2) to node [swap] {$RV_2$} (0,3.2);
\draw (0,-3.2) to node [swap] {$RV_1$} (0,-1.2);
\draw[color=gray, line width=3pt] (0,0) ellipse (3cm and 3.2cm);
\draw[color=gray, line width=3pt] (0,0) ellipse (2.5cm and 1.2cm);
\end{tikzpicture}
\end{center}
The adjunction between $R$ and $L$ means that for any coalgebra $C$ and linear map $\phi: C \lto V$ there is a unique morphism of coalgebras $\Phi : C \lto {!}V$ making
\begin{equation}\label{eq:defining_philift}
\xymatrix@C+1.5pc@R+1.5pc{
C \ar[r]^-{\phi}\ar[dr]_-{\Phi} & V\\
& {!}V \ar[u]_-{d}
}
\end{equation}
commute. The lifting $\Phi$ may be constructed as the unit followed by ${!} \phi$,
\[
\Phi := \xymatrix@C+2pc{ C \ar[r] & {!} C \ar[r]^-{{!} \phi} & {!} V}
\]
and since we also use an empty circle to denote the unit $C \lto {!} C$, this has the diagrammatic representation given on the right hand side of the following diagram. The left hand side is a convenient abbreviation for this morphism, that is, for the lifting $\Phi$:
\begin{equation}\label{eq:abbrev_for_prom}
\begin{tikzpicture}[scale=0.6,auto,inner sep=1mm, baseline=(current  bounding  box.center)]
\node (top) at (0,3) {$!V$};
\node (bottom) at (0,-3) {$C$};
\mapnode (o) at (0,0) {};
\node [right] at (o.east) {$\phi$};
\draw (o) to (0,2);
\drawbang (bottom) -- (0,-1.5);
\drawbang (0,1.5) -- (top);
\drawbang (0,-1.5) -- (o);
\drawprom (0,0) ellipse (1.5cm and 1.5cm);
\dernode (bottom) at (0,-1.5) {};
\end{tikzpicture}
\qquad := \qquad
\begin{tikzpicture}[scale=0.6,auto,inner sep=1mm,baseline=(current  bounding  box.center)]
\node (vtop) at (0,4.5) {${!} V$};
\node (bottom) at (0,-5.8) {$C$};
\coordinate (top_c) at (0,3);
\coordinate (bottom_c) at (0,-3.2);
\drawbang (top_c) to (vtop);
\dernode (bottomd) at (0,-4.5) {};
\drawbang (bottomd) to node [swap] {${!} C$} (bottom_c);
\drawbang (bottom) to (bottomd);
\draw[color=gray, line width=3pt, fill=gray!20] (0,0) ellipse (3cm and 3.2cm);
\draw[color=gray, line width=3pt, fill=white] (0,0) ellipse (2.5cm and 1.2cm);
\mapnode (o) at (0,0) {};
\node [right] at (o.east) {$\phi$};
\draw (o) to (0,1.2);
\drawbang (o) to (0,-1.1);
\draw (0,1.1) to node [swap] {$RV$} (0,3.1);
\draw (0,-3.1) to node [swap] {$RC$} (0,-1.3);
\end{tikzpicture}
\end{equation}
We follow the logic literature in referring to the grey circle denoting the induced map $\Phi$ as a \emph{promotion box}. Commutativity of \eqref{eq:defining_philift} is expressed by the identity
\begin{equation}\label{eq:prom_cancel_der}
\begin{tikzpicture}[scale=0.6,auto,inner sep=1mm,baseline=(current  bounding  box.center)]
\node (top) at (0,4) {$V$};
\dernode (altop) at (0,2.5) {};
\node (bottom) at (0,-3) {$C$};
\mapnode (o) at (0,0) {};
\node [right] at (o.east) {$\phi$};
\draw (o) to (0,2);
\drawbang (bottom) -- (0,-1.5);
\drawbang (0,1.5) -- (altop);
\draw (altop) -- (top);
\drawbang (0,-1.5) -- (o);
\drawprom (0,0) ellipse (1.5cm and 1.5cm);
\dernode (bottom) at (0,-1.5) {};
\end{tikzpicture}
\qquad = \qquad
\begin{tikzpicture}[scale=0.6,auto,inner sep=1mm,baseline=(current  bounding  box.center)]
\node (top) at (0,3) {$V$};
\node (bottom) at (0,-3) {$C$};
\mapnode (o) at (0,0) {};
\node [right] at (o.east) {$\phi$};
\drawbang (bottom) -- (o);
\draw (o) -- (top);
\end{tikzpicture}
\end{equation}
which says that dereliction annihilates with promotion boxes.

The coalgebra ${!} V$ and the dereliction map ${!} V \lto V$ satisfy a universal property and are therefore unique up to isomorphism. However, to actually write down the denotations of proofs, we will need the more explicit construction which follows from the work of Sweedler \cite{sweedler} and is spelt out in \cite{murfet_coalg}. If $V$ is finite-dimensional then
\begin{equation}\label{eq:presentation_intro}
{!} V = \bigoplus_{P \in V} \Sym_P(V)
\end{equation}
where $\Sym_P(V) = \Sym(V)$ is the symmetric coalgebra. If $e_1,\ldots,e_n$ is a basis for $V$ then as a vector space $\Sym(V) \cong k[e_1,\ldots,e_n]$. The notational convention in \cite{murfet_coalg} is to denote, for elements $\nu_1,\ldots,\nu_s \in V$, the corresponding tensor in $\Sym_P(V)$ using kets
\begin{equation}\label{eq:ket_notation}
\ket{\nu_1,\ldots,\nu_s}_P := \nu_1 \otimes \cdots \otimes \nu_s \in \Sym_P(V)\,.
\end{equation}
And in particular, the identity element of $\Sym_P(V)$ is denoted by a vacuum vector
\begin{equation}\label{eq:vacuum}
\vacu_P := 1 \in \Sym_P(V)\,.
\end{equation}
We remark that if $\nu = 0$ then $\ket{\nu}_P = 0$ is the zero vector, which is distinct from $\vacu_P = 1$. To avoid unwieldy notation we sometimes write $\nu_1 \otimes \cdots \otimes \nu_s \cdot \vacu_P$ for $\ket{\nu_1,\ldots,\nu_s}_P$. With this notation the universal map $d: {!} V \lto V$ is defined by
\[
d \vacu_P = P, \quad d\ket{\nu}_P = \nu, \quad d\ket{\nu_1,\ldots,\nu_s}_P = 0 \quad s > 1\,.
\]
The coproduct on ${!} V$ is defined by
\begin{equation}\label{eq:lc_coalgebra_1}
\Delta \ket{ \nu_1, \ldots, \nu_s }_P = \sum_{I \subseteq \{ 1, \ldots, s \}} \ket{ \nu_I }_P \otimes \ket{ \nu_{I^c} }_P
\end{equation}
where $I$ ranges over all subsets including the empty set, for a subset $I = \{ i_1, \ldots, i_p \}$ we denote by $\nu_I$ the sequence $\nu_{i_1},\ldots,\nu_{i_p}$, and $I^c$ is the complement of $I$. In particular
\[
\Delta \vacu_P = \vacu_P \otimes \vacu_P\,.
\]
The counit ${!} V \lto k$ is defined by $\vacu_P \mapsto 1$ and $\ket{\nu_1,\ldots,\nu_s}_P \mapsto 0$ for $s > 0$.

When $V$ is infinite-dimensional we may define ${!} V$ as the direct limit over the coalgebras ${!} W$ for finite-dimensional subspaces $W \subseteq V$. These are sub-coalgebras of ${!} V$, and we may therefore use the same notation as in \eqref{eq:ket_notation} to denote arbitrary elements of ${!} V$. Moreover the coproduct, counit and universal map $d$ are given by the same formulas; see \cite[\S 2.1]{murfet_coalg}. A proof of the fact that the map $d: {!} V \lto V$ described above is universal among linear maps to $V$ from cocommutative coalgebras is given in \cite[Theorem 2.18]{murfet_coalg}, but as has been mentioned this is originally due to Sweedler \cite{sweedler}, see \cite[Appendix B]{murfet_coalg}.

To construct semantics of linear logic we also need an explicit description of liftings, as given by the next theorem which is \cite[Theorem 2.20]{murfet_coalg}. For a set $X$ the set of partitions of $X$ is denoted $\cat{P}_X$.

\begin{theorem}\label{theorem:describe_lifting} Let $W, V$ be finite-dimensional vector spaces and $\phi: {!}W \lto V$ a linear map. The unique lifting of $\phi$ to a morphism of coalgebras $\Phi: {!}W \lto {!}V$ is defined for vectors $P, \nu_1,\ldots,\nu_s \in W$ by
\begin{equation}\label{eq:describe_lift}
\Phi \ket{ \nu_1, \ldots, \nu_s }_P = \sum_{C \in \cat{P}_{\{1,\ldots,s\}}} \Big|\, \phi \ket{\nu_{C_{1}} }_P\,, \ldots \,, \phi \ket{ \nu_{C_{l}}}_P \Big\rangle_Q
\end{equation}
where $Q = \phi \vacu_P$ and $l$ denotes the length of the partition $C$. Moreover $\Phi \ket{\emptyset}_P = \ket{\emptyset}_Q$.
\end{theorem}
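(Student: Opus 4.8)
The plan is to lean on the universal property recorded in \eqref{eq:defining_philift}: taking $C = {!}W$, the lifting $\Phi$ is characterised as the \emph{unique} morphism of coalgebras ${!}W \lto {!}V$ satisfying $d \circ \Phi = \phi$. It therefore suffices to prove that the linear map $\Phi$ \emph{defined} by the right-hand side of \eqref{eq:describe_lift} (i) is well-defined, (ii) is a morphism of coalgebras, i.e.\ commutes with both the coproduct and the counit, and (iii) satisfies $d \circ \Phi = \phi$; uniqueness then identifies it with the lifting.

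First I would dispatch well-definedness. Since the ket $\ket{\nu_1,\ldots,\nu_s}_P$ is symmetric and multilinear in $\nu_1,\ldots,\nu_s$, the right-hand side of \eqref{eq:describe_lift} inherits both properties: the index set $\cat{P}_{\{1,\ldots,s\}}$ is carried into itself by any relabelling of $\{1,\ldots,s\}$ while each factor $\phi\ket{\nu_{C_i}}_P$ is itself symmetric, and each $\nu_i$ lies in exactly one block of each partition and enters that block's ket linearly. The compatibility $d\circ\Phi = \phi$ and counitality are then quick: the summand attached to a partition of length $l$ lands in the degree-$l$ part of $\Sym_Q(V)$, which $d$ annihilates for $l \ge 2$, sends to $Q = \phi\ket{o}_P$ for $l=0$, and sends $\ket{w}_Q \mapsto w$ for $l=1$. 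For $s=0$ only the empty partition occurs, so $d\Phi\ket{o}_P = Q = \phi\ket{o}_P$; for $s \ge 1$ the sole surviving term is the one-block partition, giving $\phi\ket{\nu_1,\ldots,\nu_s}_P$. Counitality is identical in spirit, since the counit annihilates all positive-degree components and every partition of a nonempty set has length at least one.

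The heart of the matter, and the step I expect to be the main obstacle, is comultiplicativity, $\Delta_{{!}V}\circ\Phi = (\Phi\otimes\Phi)\circ\Delta_{{!}W}$, which reduces to a combinatorial bijection. Expanding the right-hand side with \eqref{eq:lc_coalgebra_1} followed by \eqref{eq:describe_lift} yields a sum over triples $(I,C,D)$ with $I\subseteq\{1,\ldots,s\}$, $C\in\cat{P}_I$ and $D\in\cat{P}_{I^c}$, whereas expanding the left-hand side with \eqref{eq:describe_lift} followed by \eqref{eq:lc_coalgebra_1} yields a sum over pairs $(E,J)$ with $E\in\cat{P}_{\{1,\ldots,s\}}$ and $J$ a subcollection of the blocks of $E$. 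I would then exhibit the bijection sending $(I,C,D)$ to the partition $E = C \sqcup D$ equipped with the distinguished subset $J = C$ of its blocks, whose inverse recovers $I = \bigcup_{b\in J} b$, $C = J$ and $D = E \setminus J$. Under this correspondence the paired summands agree termwise, each being $\bigl(\bigotimes_{b\in C}\phi\ket{\nu_b}_P\bigr)\ket{o}_Q \otimes \bigl(\bigotimes_{b\in D}\phi\ket{\nu_b}_P\bigr)\ket{o}_Q$, so the two sides coincide. Assembling (i)--(iii), the universal property forces $\Phi$ to be the unique lifting, completing the proof.
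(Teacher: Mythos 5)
Your proof is correct, but there is nothing in the paper itself to compare it against: the paper does not prove this theorem, it imports the statement wholesale from \cite[Theorem 2.20]{murfet_coalg}, where the explicit model of ${!}V$ and its universal property are established. Your argument is the natural self-contained verification, and it is complete: you define $\Phi$ by the right-hand side of \eqref{eq:describe_lift}, observe that well-definedness follows from symmetry and multilinearity in $\nu_1,\ldots,\nu_s$ (which suffices, by the universal property of the symmetric power, since the kets of fixed degree span the degree-$s$ component of $\Sym_P(W)$ subject to exactly those relations), verify $d\circ\Phi=\phi$ and counitality by tracking the polynomial degree contributed by each partition, and reduce comultiplicativity to the bijection between triples $(I,C,D)$ and pairs $(E,J)$ --- a partition $E$ of $\{1,\ldots,s\}$ together with a chosen subcollection $J$ of its blocks --- under which the summands agree termwise; uniqueness in \eqref{eq:defining_philift}, applied with the coalgebra ${!}W$ in place of $C$, then identifies your map with the lifting. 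The partition bijection, which is the real content, is stated correctly, including the degenerate cases $I=\emptyset$ and $I=\{1,\ldots,s\}$. The one point I would make explicit in a write-up is the infinite-dimensional case: the paper defines ${!}W$ for infinite-dimensional $W$ as a direct limit of the coalgebras of finite-dimensional subspaces, with coproduct, counit and dereliction given by the same ket formulas, so your computations --- which only ever involve the finitely many vectors $P,\nu_1,\ldots,\nu_s$ at a time --- carry over verbatim, but this deserves a sentence rather than silence.
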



\begin{example}\label{example:lifting_trivial} The simplest example of a coalgebra is the field $k$. Any $P \in V$ determines a linear map $k \lto V$ whose lifting to a morphism of coalgebras $k \lto {!}V$ sends $1 \in k$ to the vacuum $\vacu_P$, as shown in the commutative diagram
\begin{equation}\label{eq:kaiwanxiao}
\xymatrix@C+2pc{
k \ar[r]^-{P} \ar@{.>}[dr]_-{\vacu_P} & V\\
& ! V \ar[u]_-{d}
}\,.
\end{equation}
Such liftings arise from promotions with empty premises, e.g. the proof\footnote{Incidentally, this proof helps explain why defining $\den{!A} = \Sym(\den{A})$ cannot lead to semantics of linear logic, since the denotation is a morphism of coalgebras $k \lto {!} \End_k(\den{A})$ whose composition with dereliction yields the map $k \lto \End_k( \den{A} )$ sending $1 \in k$ to the identity. But this map does not admit a lifting into the symmetric coalgebra, because it produces an infinite sum. However the symmetric coalgebra \emph{is} universal in a restricted sense and is (confusingly) sometimes also called a cofree coalgebra; see \cite[\S 4]{quillen}. For further discussion of the symmetric coalgebra in the context of linear logic see \cite{blute_fock,mellies2}.}
\begin{center}
\AxiomC{}
\UnaryInfC{$A \vdash A$}
\RightLabel{\scriptsize $\multimap R$}
\UnaryInfC{$\vdash A \multimap A$}
\RightLabel{\scriptsize prom}
\UnaryInfC{$\vdash {!}(A \multimap A)$}
\DisplayProof
\end{center}
\end{example}

\subsection{The vector space semantics}\label{section:vector_space_sem}

Recall from Definition \ref{defn:denotation_objects} the definition of $\den{A}$ for each formula $A$.

\begin{definition}\label{defn:denotation_morphism} The \emph{denotation} $\den{\pi}$ of a proof $\pi$ of $\Gamma \vdash B$ is a linear map $\den{\Gamma} \lto \den{B}$ defined by inductively assigning a string diagram to each proof tree; by the basic results of the diagrammatic calculus \cite{JSGoTCI} this diagram unambiguously denotes a linear map. The inductive construction is described by the second column in \eqref{deduction_rule_ax} -- \eqref{deduction_rule_weak}. 

In each rule we assume a morphism has already been assigned to each of the sequents in the numerator of the deduction rule. These inputs are represented by blue circles in the diagram, which computes the morphism to be assigned to the denominator. To simplify the appearance of diagrams, we adopt the convention that a strand labelled by a formula $A$ represents a strand labelled by the denotation $\den{A}$. In particular, a strand labelled with a sequence $\Gamma = A_1,\ldots,A_n$ represents a strand labelled by $\den{A_1} \otimes \cdots \otimes \den{A_n}$.
\end{definition}

Some comments:
\begin{itemize}
\item The diagram for the axiom rule \eqref{deduction_rule_ax} depicts the identity of $\den{A}$.
\item The diagram for the exchange rule \eqref{deduction_rule_ex} uses the symmetry $\den{B} \otimes \den{A} \lto \den{A} \otimes \den{B}$.
\item The diagram for the cut rule \eqref{deduction_rule_cut} depicts the composition of the two inputs.
\item The right tensor rule \eqref{deduction_rule_righttensor} depicts the tensor product of the two given morphisms, while the left tensor rule \eqref{deduction_rule_lefttensor} depicts the identity, viewed as a morphism between two strands labelled $\den{A}$ and $\den{B}$ and a single strand labelled $\den{A} \otimes \den{B}$.
\item The diagram for the right $\multimap$ rule \eqref{deduction_rule_righthom} denotes the adjoint of the input morphism as in \eqref{eq:diagram_adjoint_of_map} while the left $\multimap$ rule \eqref{deduction_rule_lefthom} uses the composition map from \eqref{eq:diagram_eval_map}.
\item The diagram for the promotion rule \eqref{deduction_rule_prom} depicts the lifting of the input to a morphism of coalgebras, as explained in \eqref{eq:abbrev_for_prom}.
\item The diagram for the dereliction rule \eqref{deduction_rule_der} depicts the composition of the input with the universal map out of the coalgebra ${!} V$. The notation for this map, and the maps in the contraction \eqref{deduction_rule_contr} and weakening rules \eqref{deduction_rule_weak} are as described in \eqref{eq:coalgebra_maps}.
\end{itemize}

\begin{remark} For this to be a valid semantics, two proofs related by cut-elimination must be assigned the same morphism. This is a consequence of the general considerations in \cite[\S 7]{mellies}. More precisely, $\cat{V}$ is a Lafont category \cite[\S 7.2]{mellies} and in the terminology of \emph{loc.cit.} the adjunction between $\cat{V}$ and $\cat{C}$ is a linear-nonlinear adjunction giving rise to a model of intuitionistic linear logic. For an explanation of how the structure of a symmetric monoidal category extrudes itself from the cut-elimination transformations, see \cite[\S 2]{mellies}.
\end{remark}


\begin{example}\label{example:church_2} We convert the proof $\church{2}_A$ of \eqref{church_2_prooftree} to a diagram in stages, beginning with the leaves. Each stage is depicted in three columns: in the first is a partial proof tree, in the second is the diagram assigned to it by Definition \ref{defn:denotation_morphism}, and in the third is the explicit linear map which is the value of the diagram. 

Recall that a strand labelled $A$ actually stands for the vector space $V = \den{A}$, so for instance the first diagram denotes a linear map $V \otimes \End_k(V) \lto V$:
\begin{center}
\begin{tabular}{>{\centering}m{6cm} >{\centering}m{5cm} >{\centering}m{4cm}}
\AxiomC{}
\UnaryInfC{$A \vdash A$}
\AxiomC{}
\UnaryInfC{$A \vdash A$}
\RightLabel{\scriptsize$\multimap L$}
\BinaryInfC{$A, A \multimap A \vdash A$}
\DisplayProof
&
\begin{tikzpicture}[scale=0.3,auto]
\node (topr) at (0,2) {$A$};
\coordinate (o) at (0,0);
\node (delta) at (2.5,-4) {$A \multimap A$};
\node (left) at (-2.5, -4) {$A$};
\draw (o) to (topr);
\draw[out=90,in=180] (left) to (o);
\draw[out=90,in=0] (delta) to (o);
\end{tikzpicture}
&
$a \otimes \alpha \mapsto \alpha(a)$
\end{tabular}
\end{center}
\begin{center}
\begin{tabular}{>{\centering}m{6cm} >{\centering}m{5cm} >{\centering}m{4cm}}
\AxiomC{}
\UnaryInfC{$A \vdash A$}
\AxiomC{}
\UnaryInfC{$A \vdash A$}
\AxiomC{}
\UnaryInfC{$A \vdash A$}
\RightLabel{\scriptsize$\multimap L$}
\BinaryInfC{$A, A \multimap A \vdash A$}
\RightLabel{\scriptsize$\multimap L$}
\BinaryInfC{$A, A \multimap A, A \multimap A \vdash A$}
\DisplayProof
&
\begin{tikzpicture}[scale=0.3,auto]
\node (topr) at (1.5,2) {$A$};
\coordinate (q) at (-3,-4);
\coordinate (o) at (1.5,0);
\node (delta) at (6,-6) {$A \multimap A$};
\node (gamma) at (-3, -6) {$A$};
\node (ab) at (1,-6) {$A \multimap A$};
\draw[out=90,in=0] (delta) to (o);
\draw (o) to (topr);
\draw[out=90,in=180] (-1,-2) to (o);
\draw[out=90,in=270] (gamma) to (q);
\draw[out=90,in=180] (q) to (-1,-2);
\draw[out=90,in=0] (ab) to (-1,-2);
\end{tikzpicture}
&
$a \otimes \alpha \otimes \beta \mapsto \beta( \alpha(a) )$
\end{tabular}
\end{center}

\begin{center}
\begin{tabular}{ >{\centering}m{6cm} >{\centering}m{5cm} >{\centering}m{4cm}}
\AxiomC{}
\UnaryInfC{$A \vdash A$}
\AxiomC{}
\UnaryInfC{$A \vdash A$}
\AxiomC{}
\UnaryInfC{$A \vdash A$}
\RightLabel{\scriptsize$\multimap L$}
\BinaryInfC{$A, A \multimap A \vdash A$}
\RightLabel{\scriptsize$\multimap L$}
\BinaryInfC{$A, A \multimap A, A \multimap A \vdash A$}
\RightLabel{\scriptsize$\multimap R$}
\UnaryInfC{$A \multimap A, A \multimap A \vdash A \multimap A$}
\DisplayProof
&
\begin{tikzpicture}[scale=0.3,auto]
\coordinate (o) at (2,0);
\node (top) at ($ (o) + (0,3) $) {$A \multimap A$}; 

\coordinate (left_meet) at ($ (o) - (3, 2) $);
\draw[out=90,in=180] (left_meet) to (o);

\node (R) at ($ (o) + (3,-5) $) {$A \multimap A$};
\node (L) at ($ (o) + (-2,-5) $) {$A \multimap A$};
\coordinate (delta) at ($ (o) - (0,5) $);
\draw[out=90,in=0] (R) to (o);

\coordinate (left_curve) at ($ (o) - (5, 4) $);
\coordinate (left_curve_mid) at ($ (o) - (6,2.5) $);
\coordinate (first_meeting_top) at ($ (o) + (0,1.5) $);
\draw[out=90,in=0] (L) to (left_meet);
\draw[out=0,in=180] (left_curve) to (left_meet);
\draw (o) to (first_meeting_top);
\draw[out=180,in=270] (left_curve) to (left_curve_mid);
\draw[out=90,in=180] (left_curve_mid) to (first_meeting_top);

\draw (first_meeting_top) to (top);
\end{tikzpicture}
&
$\alpha \otimes \beta \mapsto \beta \circ \alpha$
\end{tabular}
\end{center}

\begin{center}
\begin{tabular}{ >{\centering}m{6cm} >{\centering}m{6cm} >{\centering}m{1cm}}
\AxiomC{}
\UnaryInfC{$A \vdash A$}
\AxiomC{}
\UnaryInfC{$A \vdash A$}
\AxiomC{}
\UnaryInfC{$A \vdash A$}
\RightLabel{\scriptsize$\multimap L$}
\BinaryInfC{$A, A \multimap A \vdash A$}
\RightLabel{\scriptsize$\multimap L$}
\BinaryInfC{$A, A \multimap A, A \multimap A \vdash A$}
\RightLabel{\scriptsize$\multimap R$}
\UnaryInfC{$A \multimap A, A \multimap A \vdash A \multimap A$}
\RightLabel{\scriptsize der}
\UnaryInfC{$!( A \multimap A ), A \multimap A \vdash A \multimap A$}
\RightLabel{\scriptsize der}
\UnaryInfC{$!( A \multimap A ), !(A \multimap A) \vdash A \multimap A$}
\DisplayProof
&
\begin{tikzpicture}[scale=0.3,auto,inner sep=1mm]
\coordinate (o) at (2,0);
\node (top) at ($ (o) + (0,3) $) {}; 

\coordinate (left_meet) at ($ (o) - (3, 2) $);
\draw[out=90,in=180] (left_meet) to (o);

\dernode (R) at ($ (o) + (2,-4) $) {};
\dernode (L) at ($ (o) + (-2,-4) $) {};
\coordinate (delta) at ($ (o) - (0,5) $);
\draw[out=90,in=0] (R) to (o);
\drawbang (R) to ($ (R) - (0,2) $);
\drawbang (L) to ($ (L) - (0,2) $);

\coordinate (left_curve) at ($ (o) - (5, 4) $);
\coordinate (left_curve_mid) at ($ (o) - (6,2.5) $);
\coordinate (first_meeting_top) at ($ (o) + (0,1.5) $);
\draw[out=90,in=0] (L) to (left_meet);
\draw[out=0,in=180] (left_curve) to (left_meet);
\draw (o) to (first_meeting_top);
\draw[out=180,in=270] (left_curve) to (left_curve_mid);
\draw[out=90,in=180] (left_curve_mid) to (first_meeting_top);

\draw (first_meeting_top) to (top);
\end{tikzpicture}
&

\tagarray{\label{2_prime_pre}}

\end{tabular}
\end{center}
The map ${!} \End_k(V) \otimes {!} \End_k(V) \lto \End_k(V)$ in \eqref{2_prime_pre} is zero on $\ket{\nu_1,\ldots,\nu_s}_\alpha \otimes \ket{\mu_1,\ldots,\mu_t}_\beta$ for $\alpha,\beta \in \End_k(V)$ unless $s,t \le 1$, and in those cases it is given by
\begin{align*}
\vacu_\alpha \otimes \vacu_\beta &\longmapsto \beta \circ \alpha\\
\ket{\nu}_\alpha \otimes \vacu_\beta &\longmapsto \beta \circ \nu\\
\vacu_\alpha \otimes \ket{\mu}_\beta &\longmapsto \mu \circ \alpha\\
\ket{\nu}_\alpha \otimes \ket{\mu}_\beta &\longmapsto \mu \circ \nu\,.
\end{align*}
The next deduction rule in $\church{2}_A$ is a contraction:
\begin{center}
\begin{tabular}{ >{\centering}m{6cm} >{\centering}m{6cm} >{\centering}m{1cm}}
\AxiomC{}
\UnaryInfC{$A \vdash A$}
\AxiomC{}
\UnaryInfC{$A \vdash A$}
\AxiomC{}
\UnaryInfC{$A \vdash A$}
\RightLabel{\scriptsize$\multimap L$}
\BinaryInfC{$A, A \multimap A \vdash A$}
\RightLabel{\scriptsize$\multimap L$}
\BinaryInfC{$A, A \multimap A, A \multimap A \vdash A$}
\RightLabel{\scriptsize$\multimap R$}
\UnaryInfC{$A \multimap A, A \multimap A \vdash A \multimap A$}
\RightLabel{\scriptsize der}
\UnaryInfC{$!( A \multimap A ), A \multimap A \vdash A \multimap A$}
\RightLabel{\scriptsize der}
\UnaryInfC{$!( A \multimap A ), !(A \multimap A) \vdash A \multimap A$}
\RightLabel{\scriptsize ctr}
\UnaryInfC{$!( A \multimap A ) \vdash A \multimap A$}
\DisplayProof
&
\begin{tikzpicture}[scale=0.3,auto,inner sep=1mm]
\coordinate (o) at (2,0);
\node (top) at ($ (o) + (0,3) $) {}; 

\coordinate (left_meet) at ($ (o) - (3, 2) $);
\draw[out=90,in=180] (left_meet) to (o);

\dernode (R) at ($ (o) + (2,-4) $) {};
\dernode (L) at ($ (o) + (-2,-4) $) {};
\coordinate (delta) at ($ (o) - (0,6) $);
\draw[out=90,in=0] (R) to (o);
\drawbang[out=270,in=0] (R) to (delta);
\drawbang[out=270,in=180] (L) to (delta);
\drawbang (delta) to ($ (delta) - (0,1.5) $);

\coordinate (left_curve) at ($ (o) - (5, 4) $);
\coordinate (left_curve_mid) at ($ (o) - (6,2.5) $);
\coordinate (first_meeting_top) at ($ (o) + (0,1.5) $);
\draw[out=90,in=0] (L) to (left_meet);
\draw[out=0,in=180] (left_curve) to (left_meet);
\draw (o) to (first_meeting_top);
\draw[out=180,in=270] (left_curve) to (left_curve_mid);
\draw[out=90,in=180] (left_curve_mid) to (first_meeting_top);

\draw (first_meeting_top) to (top);
\end{tikzpicture}

&

\tagarray{\label{2_prime}}
\end{tabular}
\end{center}
Let $\phi = \den{\church{2}_A}$ be the linear map ${!} \End_k(V) \lto \End_k(V)$ denoted by this string diagram. Note that this denotation is precisely the composite \eqref{eq:2_as_comp} displayed in Example \ref{example:2_denotation_preview}. We can compute for example the image of $\ket{\nu}_\alpha$ under $\phi$ as follows:
\begin{gather*}
\ket{\nu}_\alpha \longmapsto \ket{\nu}_\alpha \otimes \vacu_\alpha + \vacu_\alpha \otimes \ket{\nu}_\alpha \longmapsto \alpha \circ \nu + \nu \circ \alpha = \{ \nu, \alpha \}\,.
\end{gather*}
This formula is perhaps not surprising as it is the tangent map of the squaring function, see Appendix \ref{section:example_lifting}. Similarly we compute that
\begin{equation}\label{eq:church_2_den}
\phi \vacu_\alpha = \alpha^2, \qquad \phi \ket{\nu}_\alpha = \{ \nu, \alpha \}\,, \qquad \phi \ket{\nu,\mu}_\alpha = \{ \nu, \mu \}\,.
\end{equation}
The final step in the proof of $\church{2}_A$ consists of moving the ${!}(A \multimap A)$ to the right side of the turnstile, which yields the final diagram:
\begin{equation}\label{church_2_diagram}
\begin{tikzpicture}[scale=0.55,auto,baseline=(current  bounding  box.center)]
\coordinate (o) at (2,0);
\node (top) at ($ (o) + (0,4) $) {$\inta_A$}; 

\coordinate (left_meet) at ($ (o) - (3, 2) $);
\draw[out=90,in=180] (left_meet) to node {$A$} (o);

\dernode (R) at ($ (o) + (2,-3) $) {};
\dernode (L) at ($ (o) + (-2,-3) $) {};
\coordinate (delta) at ($ (o) - (0,5) $);
\draw[out=90,in=0] (R) to node [swap] {$A \multimap A$} (o);
\drawbang[out=0,in=270] (delta) to (R);
\drawbang[out=180,in=270] (delta) to (L);

\coordinate (left_curve) at ($ (o) - (5, 4) $);
\coordinate (left_curve_mid) at ($ (o) - (6,2.5) $);
\coordinate (first_meeting_top) at ($ (o) + (0,1.5) $);
\draw[out=90,in=0] (L) to node [swap] {$A \multimap A$} (left_meet);
\draw[out=0,in=180] (left_curve) to node {$A$} (left_meet);
\draw (o) to node [swap] {$A$} (first_meeting_top);
\draw[out=180,in=270] (left_curve) to (left_curve_mid);
\draw[out=90,in=180] (left_curve_mid) to (first_meeting_top);

\coordinate (second_meeting_top) at ($ (first_meeting_top) + (0,1.5) $);
\draw (first_meeting_top) to node [swap] {$A \multimap A$} (second_meeting_top);
\draw (second_meeting_top) to (top);

\coordinate (curve_bottom) at ($ (left_meet) - (0,5) $);
\coordinate (curve_left) at ($ (o) - (7.5, 2) $);
\drawbang[out=0,in=270] (curve_bottom) to (delta);
\drawbang[out=180,in=270] (curve_bottom) to (curve_left);
\drawbang[out=90,in=180] (curve_left) to node {$!(A \multimap A)$} (second_meeting_top);
\end{tikzpicture}
\end{equation}
The denotation of this diagram is the same morphism $\phi$. The reader might like to compare this style of diagram for $\church{2}_A$ to the corresponding proof-net in \cite[\S 5.3.2]{girard_llogic}. 
\end{example}

In Example \ref{example:2_denotation_preview} we sketched how to recover the function $\alpha \mapsto \alpha^2$ from the denotation of the Church numeral $\church{2}_A$, but now we can put this on a firmer footing. There is \emph{a priori} no linear map $\den{B} \lto \den{C}$ associated to a proof $\pi$ of ${!} B \vdash C$ but there is a function $\den{\pi}_{nl}$ ($nl$ standing for \emph{nonlinear}) defined on $P \in \den{B}$ by lifting to $! \den{B}$ and then applying $\den{\pi}$:
\begin{equation}\label{eq:lifting_vacua}
\xymatrix@C+2pc{
k \ar[r]^-{P} \ar@{.>}[dr]_-{\vacu_P} & \den{B} & \den{C}\\
& ! \den{B} \ar[u]_-{d} \ar[ur]_-{\den{\pi}}
}\,.
\end{equation}
That is,

\begin{definition}\label{defn:nonlinear_denotation}
The function $\den{\pi}_{nl}: \den{B} \lto \den{C}$ is defined by $\den{\pi}_{nl}(P) = \den{\pi} \vacu_P$.
\end{definition}

The discussion above shows that, with $V = \den{A}$,

\begin{lemma}\label{lemma:nonlinear_recover} $\den{\church{2}_A}_{nl}: \End_k(V) \lto \End_k(V)$ is the map $\alpha \mapsto \alpha^2$.
\end{lemma}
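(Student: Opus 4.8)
The plan is to unwind Definition \ref{defn:nonlinear_denotation} and then invoke the explicit computation of $\den{\church{2}_A}$ already carried out in Example \ref{example:church_2}. Write $\phi = \den{\church{2}_A} : {!}\End_k(V) \lto \End_k(V)$, and fix $\alpha \in \End_k(V) = \den{A \multimap A}$. By the definition of the nonlinear denotation we have $\den{\church{2}_A}_{nl}(\alpha) = \phi\ket{o}_\alpha$, so the whole statement reduces to the single evaluation $\phi\ket{o}_\alpha = \alpha^2$.

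For this I would use that, as established in Example \ref{example:church_2}, the map $\phi$ is precisely the composite \eqref{eq:2_as_comp}, namely $(-\circ-)\circ (d \otimes d)\circ \Delta$. Tracing the vacuum $\ket{o}_\alpha$ through the three arrows in turn: the coproduct formula gives $\Delta\ket{o}_\alpha = \ket{o}_\alpha \otimes \ket{o}_\alpha$; the universal map formula $d\ket{o}_P = P$ gives $(d\otimes d)(\ket{o}_\alpha \otimes \ket{o}_\alpha) = \alpha \otimes \alpha$; and the composition map sends $\alpha \otimes \alpha$ to $\alpha \circ \alpha$. Hence $\phi\ket{o}_\alpha = \alpha \circ \alpha = \alpha^2$, which is exactly the value already recorded in \eqref{eq:church_2_den}.

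Since the identification of $\phi$ with the composite \eqref{eq:2_as_comp}, together with all the relevant coalgebra formulas, is already in hand, there is no genuine obstacle here; the content of the lemma is essentially a repackaging of \eqref{eq:church_2_den} in the language of Definition \ref{defn:nonlinear_denotation}. The only points requiring care are bookkeeping: one should confirm that the input genuinely lives in $\den{A \multimap A} = \End_k(V)$, that $\ket{o}_\alpha$ really is the image of $1$ under the lift of the map $\alpha : k \lto \End_k(V)$ (Example \ref{example:lifting_trivial}) so that Definition \ref{defn:nonlinear_denotation} applies verbatim, and that the order of composition is tracked correctly so that the two copies of $\alpha$ produced by the coproduct compose to $\alpha^2$ rather than to some unintended rearrangement. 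Both checks are routine.
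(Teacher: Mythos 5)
Your proposal is correct and follows exactly the paper's own route: the paper proves this lemma by combining Definition \ref{defn:nonlinear_denotation} with the computation, already carried out in Examples \ref{example:2_denotation_preview} and \ref{example:church_2}, that $\den{\church{2}_A}$ is the composite \eqref{eq:2_as_comp} and hence sends $\ket{o}_\alpha \mapsto \ket{o}_\alpha \otimes \ket{o}_\alpha \mapsto \alpha \otimes \alpha \mapsto \alpha \circ \alpha$, as recorded in \eqref{eq:church_2_den}. Your additional bookkeeping remarks (that $\ket{o}_\alpha$ is the lift of $\alpha: k \to \End_k(V)$ as in Example \ref{example:lifting_trivial}) are exactly the content of diagram \eqref{eq:lifting_vacua} preceding the lemma.
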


This completes our explanation of how to represent the Church numeral $\church{2}_A$ as a linear map. From this presentation we see clearly that the non-linearity of the map $\alpha \mapsto \alpha^2$ is concentrated in the promotion step \eqref{eq:lifting_vacua} where the input vector $\alpha$ is turned into a vacuum $\vacu_\alpha \in {!} \End_k(V)$. The promotion step is non-linear since $\vacu_\alpha + \vacu_\beta$ is not a morphism of coalgebras and thus cannot be equal to $\vacu_{\alpha + \beta}$. After this step, the duplication, dereliction and composition shown in \eqref{eq:2_deals_with_0} are all linear. 


\begin{example}\label{example:2_promotion} Applied to $\church{2}_A$ the promotion rule generates a new proof
\begin{prooftree}
\AxiomC{$\church{2}_A$}
\noLine\UnaryInfC{$\vdots$}
\def\extraVskip{5pt}
\noLine\UnaryInfC{$!(A \multimap A) \vdash A \multimap A$}
\def\extraVskip{2pt}
\RightLabel{\scriptsize prom}
\UnaryInfC{$!( A \multimap A) \vdash {!}(A \multimap A)$}
\end{prooftree}
which we denote $\operatorname{prom}( \church{2}_A )$. By definition the denotation $\Phi := \den{ \operatorname{prom}( \church{2}_A ) }$ of this proof is the unique morphism of coalgebras
\[
\Phi: {!} \End_k(V) \lto {!} \End_k(V)
\]
with the property that $d \circ \Phi = \phi$, where $\phi = \den{\church{2}_A}$ is as in \eqref{eq:2_as_comp}. From \eqref{eq:church_2_den} and Theorem \ref{theorem:describe_lifting} we compute that, for example
\begin{align*}
\Phi \vacu_\alpha &= \vacu_{\alpha^2}\,,\\
\Phi \ket{\nu}_\alpha &= \{ \nu, \alpha \} \cdot \vacu_{\alpha^2},\\
\Phi \ket{\nu, \mu}_\alpha &= \big( \{ \nu, \mu \} + \{ \nu, \alpha \} \otimes \{ \mu, \alpha \} \big) \cdot \vacu_{\alpha^2}\,,\\
\Phi \ket{\nu, \mu, \theta}_\alpha &= \big( \{ \nu, \mu \} \otimes \{ \theta, \alpha \} + \{ \theta, \mu \} \otimes \{ \nu, \alpha \} + \{ \nu, \theta \} \otimes \{ \mu, \alpha \}\\
&+ \{ \nu, \alpha \} \otimes \{ \mu, \alpha \} \otimes \{ \theta, \alpha \} \big) \cdot \vacu_{\alpha^2}\,.
\end{align*}
Note that the commutators, e.g. $\{ \nu, \alpha \}$ are defined using the product internal to $\End_k(V)$, whereas inside the bracket in the last two lines, the terms $\{ \nu, \alpha \}$ and $\{ \mu, \alpha \}$ are multiplied in the algebra $\Sym( \End_k(V) )$ before being made to act on the vacuum.
\end{example}

\begin{remark}\label{remark_otherpromotion} As we have already mentioned, there are numerous semantics of linear logic defined using topological vector spaces. The reader curious about how these vector space semantics are related to the ``relational'' style of semantics such as coherence spaces should consult Ehrhard's paper on finiteness spaces \cite{ehrhard}.

One can generate simple variants on the vector space semantics by looking at comonads on the category $\cat{V}$ defined by truncations on the coalgebras ${!} V$. For example, given a vector space $V$, let ${!}_0 V$ denote the subspace of ${!} V$ generated by the vacua $\vacu_P$. This is the free space on the underlying \emph{set} of $V$, and it is a subcoalgebra of ${!} V$ given as a coproduct of trivial coalgebras. It is easy to see that this defines an appropriate comonad and gives rise to a semantics of linear logic \cite[\S 4.3]{valiron}. However the semantics defined using ${!} V$ is more interesting, because universality allows us to mix in arbitrary coalgebras $C$. In Appendix \ref{section:example_lifting} we examine the simplest example where $C$ is the dual of the algebra $k[t]/t^2$ and relate this to tangent vectors.
\end{remark}

\section{Cut-elimination}\label{section:cut_elim}

The dynamical part of linear logic is a rewrite rule on proofs called \emph{cut-elimination}, which defines the way that proofs interact and is the analogue in linear logic of $\beta$-reduction in the $\lambda$-calculus. Collectively these interactions give the connectives their meaning. The full set of rewrite rules is given in \cite[Section 3]{mellies} and in the alternative language of proof-nets in \cite[\S 4]{girard_llogic}, \cite[p.18]{pagani}. We will not reproduce the full list here because it takes up several pages and there are some subtle cases. That being said, we do not wish to leave the reader with the impression that these rules are mysterious: the key rules have a clear conceptual content, which we elucidate in this section by presenting a worked example involving our favourite proof $\church{2}_A$ and the string diagram transformations associated to the rewrites.

\begin{definition} Let $\Gamma \vdash A$ be a sequent of linear logic and $\cat{P}$ the set of proofs. There is a binary relation $\rightsquigarrow$ on $\cat{P}$ defined by saying that $\pi \rightsquigarrow \rho$ if $\rho$ is an immediate reduction of $\pi$ by one of the rewrite rules listed in \cite[Section 3]{mellies}. These rewrite rules are sometimes referred to as \emph{cut-elimination transformations}.

We write $=_{cut}$ for the smallest reflexive, transitive and symmetric relation on $\cat{P}$ containing $\rightsquigarrow$ and call this \emph{equivalence under cut-elimination}.
\end{definition}


\begin{example}
In the context of Example \ref{example:cutagainst2} there is a sequence of transformations
\[
\church{2}_A \l \operatorname{prom}( \pi ) = \pi_0 \rightsquigarrow \pi_1 \rightsquigarrow \cdots \rightsquigarrow \pi \l \pi
\]
which begins with the rewrites \cite[\S 3.9.3]{mellies} and \cite[\S 3.9.1]{mellies}.
\end{example}

\begin{example}\label{example:cut_elim_examples} The cut-elimination transformation \cite[\S 3.8.2]{mellies} tells us that
\begin{center}
\begin{tabular}{ >{\centering}m{7cm} >{\centering}m{4cm} >{\centering}m{3cm}}
\AxiomC{$\pi_2$}
\noLine\UnaryInfC{$\vdots$}
\def\extraVskip{5pt}
\noLine\UnaryInfC{$A \vdash B$}
\def\extraVskip{2pt}
\RightLabel{\scriptsize $\multimap R$}
\UnaryInfC{$\vdash A \multimap B$}
\AxiomC{$\pi_1$}
\noLine\UnaryInfC{$\vdots$}
\def\extraVskip{5pt}
\noLine\UnaryInfC{$\Gamma \vdash A$}
\def\extraVskip{2pt}
\AxiomC{$\pi_3$}
\noLine\UnaryInfC{$\vdots$}
\def\extraVskip{5pt}
\noLine\UnaryInfC{$B \vdash C$}
\def\extraVskip{2pt}
\RightLabel{\scriptsize $\multimap L$}
\BinaryInfC{$\Gamma, A \multimap B \vdash C$}
\RightLabel{\scriptsize cut}
\BinaryInfC{$\Gamma \vdash C$}
\DisplayProof
&
\begin{tikzpicture}[scale=0.35,auto]
\node (top) at (0,5) {$C$};
\mapnode (mid_top) at (0,2) {};
\node [above] at (mid_top.east) {$\;\;\;\;\;\;\den{\pi_3}$};
\node (bottoml) at (-3,-5) {$\Gamma$};
\mapnode (pi1) at (-3,-3) {};
\node [above] at (pi1.east) {$\;\;\;\;\;\;\;\;\;\den{\pi_1}$};
\coordinate (o) at (0,0);
\draw (o) -- (mid_top);
\draw (mid_top) -- (top);
\draw (bottoml) -- (pi1);
\draw[out=90,in=180] (pi1) to (o);
\mapnode (other) at (3,-3) {};
\coordinate (other_up) at (3,-2) {};
\node [right] at (other.east) {$\den{\pi_2}$};
\draw[out=90,in=0] (other_up) to (o);
\draw (other) to (other_up);
\coordinate (turn) at ($ (other) - (1.5,2) $);
\coordinate (upper_turn) at ($ (turn) + (-1,1) $);
\draw[out=270,in=0] (other) to (turn);
\draw[out=180,in=270] (turn) to (upper_turn);
\draw[out=90,in=180] (upper_turn) to (other_up);
\end{tikzpicture}
&
\tagarray{\label{eq:cutelim3}}
\end{tabular}
\end{center}
may be transformed to
\begin{center}
\begin{tabular}{ >{\centering}m{8cm} >{\centering}m{3cm} >{\centering}m{3cm}}
\AxiomC{$\pi_1$}
\noLine\UnaryInfC{$\vdots$}
\def\extraVskip{5pt}
\noLine\UnaryInfC{$\Gamma \vdash A$}
\def\extraVskip{2pt}
\AxiomC{$\pi_2$}
\noLine\UnaryInfC{$\vdots$}
\def\extraVskip{5pt}
\noLine\UnaryInfC{$A \vdash B$}
\def\extraVskip{2pt}
\RightLabel{\scriptsize cut}
\BinaryInfC{$\Gamma \vdash B$}
\AxiomC{$\pi_3$}
\noLine\UnaryInfC{$\vdots$}
\def\extraVskip{5pt}
\noLine\UnaryInfC{$B \vdash C$}
\def\extraVskip{2pt}
\RightLabel{\scriptsize cut}
\BinaryInfC{$\Gamma \vdash C$}
\DisplayProof
&
\begin{tikzpicture}[scale=0.35,auto,inner sep=1mm]
\node (top) at (0,5) {$C$};
\mapnode (pi3) at (0,3) {};
\node [right] at (pi3.east) {$\den{\pi_3}$};
\mapnode (pi2) at (0,1) {};
\node [right] at (pi2.east) {$\den{\pi_2}$};
\mapnode (pi1) at (0,-1) {};
\node [right] at (pi1.east) {$\den{\pi_1}$};
\node (bottom) at (0,-3) {$\Gamma$};
\draw (bottom) -- (pi1);
\draw (pi1) -- (pi2);
\draw (pi2) -- (pi3);
\draw (pi3) -- (top);
\end{tikzpicture}
&
\tagarray{\label{eq:cutelim4}}
\end{tabular}
\end{center}
Observe how this cut-elimination transformation encodes the meaning of the left introduction rule $\multimap L$, in the sense that it takes a proof $\pi_1$ of $\Gamma \vdash A$ and a proof $\pi_3$ of $B \vdash C$ and says: if you give me a proof of $A \vdash B$ I know how to sandwich it between $\pi_1$ and $\pi_3$.
\end{example}

\begin{example}\label{example:cut_elim_examples2} The cut-elimination transformation \cite[\S 3.11.10]{mellies} tells us that
\begin{center}
\begin{tabular}{ >{\centering}m{7cm} >{\centering}m{4cm} >{\centering}m{3cm}}
\AxiomC{$\pi_1$}
\noLine\UnaryInfC{$\vdots$}
\def\extraVskip{5pt}
\noLine\UnaryInfC{$\Gamma \vdash A$}
\def\extraVskip{2pt}
\AxiomC{$\pi_2$}
\noLine\UnaryInfC{$\vdots$}
\def\extraVskip{5pt}
\noLine\UnaryInfC{$B, A \vdash C$}
\def\extraVskip{2pt}
\RightLabel{\scriptsize $\multimap R$}
\UnaryInfC{$A \vdash B \multimap C$}
\RightLabel{\scriptsize cut}
\BinaryInfC{$\Gamma \vdash B \multimap C$}
\DisplayProof
&
\begin{tikzpicture}[inner sep = 0.5mm, scale=0.4,auto]
\node (topr) at (1,4) {$B \multimap C$};
\node[circle,draw=black,fill=black] (o) at (1,0) {};
\node [above] at (o.east) {$\;\;\;\;\;\;\;\;\den{\pi_2}$};
\node[circle,draw=black,fill=black] (other) at (2.5,-2) {};
\node [right] at (other.east) {$\den{\pi_1}$};
\node (gamma) at (2.5,-4) {$\Gamma$};
\draw[out=90,in=0] (gamma) to (o);
\draw[out=0,in=180] (-1.5,-2) to (o);
\draw[out=180,in=270] (-1.5,-2) to (-3,0);
\draw[out=90,in=180] (-3,0) to (1,2);
\draw (o) to (1,2);
\draw (1,2) to (topr);
\end{tikzpicture}
&
\tagarray{\label{eq:cutelim1}}
\end{tabular}
\end{center}
is transformed to the proof
\begin{center}
\begin{tabular}{ >{\centering}m{7cm} >{\centering}m{4cm} >{\centering}m{3cm}}
\AxiomC{$\pi_1$}
\noLine\UnaryInfC{$\vdots$}
\def\extraVskip{5pt}
\noLine\UnaryInfC{$\Gamma \vdash A$}
\def\extraVskip{2pt}
\AxiomC{$\pi_2$}
\noLine\UnaryInfC{$\vdots$}
\def\extraVskip{5pt}
\noLine\UnaryInfC{$B, A \vdash C$}
\def\extraVskip{2pt}
\RightLabel{\scriptsize cut}
\BinaryInfC{$B, \Gamma \vdash C$}
\RightLabel{\scriptsize $\multimap R$}
\UnaryInfC{$\Gamma \vdash B \multimap C$}
\DisplayProof
&
\begin{tikzpicture}[inner sep = 0.5mm, scale=0.4,auto]
\node (topr) at (1,4) {$B \multimap C$};
\node[circle,draw=black,fill=black] (o) at (1,0) {};
\node [above] at (o.east) {$\;\;\;\;\;\;\;\;\;\;\;\;\;\;\;\;\;\den{\pi_2} \circ \den{\pi_1}$};
\node (gamma) at (2.5,-4) {$\Gamma$};
\draw[out=90,in=0] (gamma) to (o);
\draw[out=0,in=180] (-1.5,-2) to (o);
\draw[out=180,in=270] (-1.5,-2) to (-3,0);
\draw[out=90,in=180] (-3,0) to (1,2);
\draw (o) to (1,2);
\draw (1,2) to (topr);
\end{tikzpicture}
&
\tagarray{\label{eq:cutelim2}}
\end{tabular}
\end{center}
and thus the two corresponding diagrams are equal. In this case, the equality generated by cut-elimination expresses the fact that the Hom-tensor adjunction is natural.
\end{example}

\begin{example}\label{example:cut_elim_examples3} The cut-elimination transformation \cite[\S 3.6.1]{mellies} tells us that
\begin{center}
\AxiomC{} 
\UnaryInfC{$A \vdash A$}
\AxiomC{$\pi$}
\noLine\UnaryInfC{$\vdots$}
\def\extraVskip{5pt}
\noLine\UnaryInfC{$A \vdash B$}
\RightLabel{\scriptsize cut}
\BinaryInfC{$A \vdash B$}
\DisplayProof
\end{center}
may be transformed to the proof $\pi$. Thus the axiom rule acts as an identity for cut.
\end{example}

The analogue in linear logic of a normal form in $\lambda$-calculus is

\begin{definition} A proof is \emph{cut-free} if it contains no occurrences of the cut rule.
\end{definition}

The main theorem is that the relation $\rightsquigarrow$ is \emph{weakly normalising}, which means:

\begin{theorem}[Cut-elimination] For each proof $\pi$ in linear logic there is a sequence
\begin{equation}\label{eq:proof_pi}
\pi = \pi_0 \rightsquigarrow \pi_1 \rightsquigarrow \cdots \rightsquigarrow \pi_n = \widetilde{\pi}
\end{equation}
where $\widetilde{\pi}$ is cut-free, and $n$ depends on $\pi$.
\end{theorem}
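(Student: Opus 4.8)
The plan is to prove weak normalisation by the double induction of Gentzen's original strategy, adapted to the exponential connective. First I would assign to each formula $A$ a \emph{degree} $\partial(A)$ equal to the number of occurrences of the connectives $\otimes, \multimap, !$ in $A$, and to each proof $\pi$ a \emph{cut-degree} $d(\pi)$, defined as the maximum of $\partial(A)$ over all formulas $A$ occurring as a cut formula in $\pi$, with $d(\pi) = 0$ precisely when $\pi$ is cut-free. Since $d(\pi)$ is a non-negative integer, the theorem reduces to the single assertion that whenever $d(\pi) > 0$ there is a finite sequence $\pi \rightsquigarrow \cdots \rightsquigarrow \pi'$ with $d(\pi') < d(\pi)$; iterating this assertion at most $d(\pi)$ times produces a cut-free $\widetilde{\pi}$.

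To lower the cut-degree I would work with a \emph{topmost maximal cut}: a cut whose cut formula $A$ satisfies $\partial(A) = d(\pi)$ and such that neither premise subproof above it contains a cut of degree $\ge d(\pi)$. Such a cut exists, being any cut of degree $d(\pi)$ that is highest in the proof tree. The heart of the argument is the following lemma: a proof ending in a cut of degree $d$ whose two premise subproofs both have cut-degree $< d$ can be rewritten, in finitely many reductions, to a proof of cut-degree $< d$. Granting the lemma, I would apply it to the subproof whose final rule is a topmost maximal cut (its premises have cut-degree $< d(\pi)$, so the hypothesis holds); since this is a local rewrite it leaves the rest of the proof untouched and strictly decreases the number of cuts of degree $d(\pi)$, so repeating it finitely often removes all such cuts and lowers $d(\pi)$.

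I would prove the lemma by induction on the sum $h$ of the heights of the two premise subproofs, with a case analysis on the last rule in each premise. If the cut formula is not principal in the last rule on one side, a commutative conversion of the kind in Example \ref{example:cut_elim_examples} pushes the cut past that rule: the cut formula and hence its degree $d$ are unchanged, but $h$ strictly decreases and the premises still have cut-degree $< d$, so the inductive hypothesis applies. If an axiom meets the cut, the reduction of Example \ref{example:cut_elim_examples3} deletes it. If the cut formula is principal on both sides and multiplicative, say $A = B \otimes C$ or $A = B \multimap C$, the key reduction replaces the single cut on $A$ by cuts on the immediate subformulas $B$ and $C$, each of strictly smaller degree, giving cut-degree $< d$. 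If the cut formula is $A = {!}B$ and principal on both sides, the left premise ends in promotion and the right ends in dereliction, weakening, contraction, or a second promotion: dereliction turns the ${!}B$-cut into a $B$-cut of smaller degree, weakening erases the promoted box, and promotion-against-promotion commutes the box inward and finishes by the inductive hypothesis.

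The main obstacle is the remaining subcase, promotion against \emph{contraction}. Here the contraction reduction duplicates the entire promotion box, producing \emph{two} cuts on ${!}B$, both again of degree $d$, so neither $d$ nor the height-sum $h$ decreases and the naive induction fails. I expect to resolve this exactly as in the proof-net treatments of \cite{girard_llogic} and \cite[Section 3]{mellies}, by refining the induction measure to a lexicographic pair $(m,h)$, where $m$ counts the dereliction, contraction and weakening nodes in the right premise that act on descendants of the distinguished occurrence of ${!}B$. Because the cut was chosen topmost, every cut inside the box has degree $< d$, so duplicating the box introduces no new cut of degree $d$; and each promotion-against-contraction step consumes one contraction node and thus strictly decreases $m$, while the non-exponential reductions decrease $h$ as before. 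This refined bookkeeping closes the induction and establishes the lemma, and with it the cut-elimination theorem; note that, since we only claim weak normalisation, we are free to privilege this topmost reduction strategy.
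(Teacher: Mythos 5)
The paper does not actually prove this theorem: its ``proof'' is a pointer to Girard's proof-net argument \cite{girard_llogic}, to a sequent-calculus proof in \cite{brauner}, and to Gentzen's strategy \cite{gentzen,girard_prooftypes}. What you propose is a reconstruction of exactly that Gentzen-style argument---outer induction on cut-degree, a key lemma removing a topmost maximal cut, commutative conversions when the cut formula is not principal, subformula cuts in the principal multiplicative cases, and a refined measure for the exponential cases---so your route is the one the paper defers to, and you correctly identify promotion-against-contraction as the point where the naive induction on heights collapses.

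Two things stand between your sketch and a complete proof. The trivial one: with $\partial(A)$ counting connectives, a cut on a propositional variable or on $1$ has degree $0$, so ``$d(\pi)=0$ precisely when $\pi$ is cut-free'' is false as stated; count atoms with degree $1$ (equivalently, define the degree of a cut to be $\partial(A)+1$), after which atomic cuts are removed by the same commutative conversions terminating in your axiom case. The substantive one concerns the contraction case itself. After the promotion-against-contraction step you do not obtain two independent cuts of smaller measure: you obtain two \emph{nested} cuts on ${!}B$, the outer of which has the inner one (still of degree $d$) sitting inside its right premise, so the hypothesis of your lemma fails for the outer cut. The induction must therefore run in stages: first apply the inductive hypothesis to the inner cut (its $m$-value is at most $m-1$, so the lexicographic measure drops), obtaining a proof $\rho$ of cut-degree $<d$; then apply it to the cut of the box $P$ against $\rho$. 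For this second application you need that the $m$-value of the surviving occurrence of ${!}B$, computed in $\rho$, is still at most $m-1$; that is, that eliminating the inner cut neither duplicates nor creates dereliction, contraction or weakening nodes over the occurrences tracing the surviving ${!}B$ upward. This is true, but it is precisely the bookkeeping your sketch waves at: during the elimination of the inner cut the only subproofs ever duplicated or erased are copies of $P$ (this remains so in the promotion-against-promotion step, where $P$ slides inside a box of the right premise but the ongoing degree-$d$ cuts still have copies of $P$ as their left premises), the fresh contractions and weakenings produced by those steps act on descendants of ${!}\Gamma$ only, and $P$ contains no occurrence tracing to the surviving ${!}B$; hence $m$ is stable. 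Without this observation the lexicographic induction on $(m,h)$ does not close; with it, it does. Gentzen's own device for avoiding exactly this analysis is to replace the binary cut by a multicut (his ``mix'') consuming both occurrences of ${!}B$ at once, so that the contraction case strictly reduces height; that is the cleaner path if you intend to write the argument out in full.
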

\begin{proof}
The original proof by Girard is given in the language of proof-nets \cite{girard_llogic}. For a proof in the language of sequent calculus see for example \cite[Appendix B]{brauner}. The argument follows Gentzen's proof for cut-elimination in classical logic; see \cite{gentzen} and \cite[Chapter 13]{girard_prooftypes}. 
\end{proof}

This is the analogue of Gentzen's \emph{Hauptsatz} for ordinary logic, which was mentioned in the introduction. At a high level, the sequence of rewrites \eqref{eq:proof_pi} eliminates the implicitness present in the original proof until the fully explicit, cut-free proof $\widetilde{\pi}$ is left. 

\begin{remark} It is possible that multiple rewrite rules apply to a given proof, so that there is more than one sequence \eqref{eq:proof_pi} with $\pi$ as its starting point. This raises the natural question: does \emph{every} sequence of rewrites starting from $\pi$ terminate with a cut-free proof? This property of a rewrite rule is called \emph{strong normalisation}. We could also ask whether it is the case that whenever a proof $\pi$ reduces to $\pi'$ as well as to $\pi''$, there exists a proof $\pi'''$ to which both of the proofs $\pi'$ and $\pi''$ reduce; this is called \emph{confluence} or the \emph{Church-Rosser property}. If a rewrite rule satisfies both strong normalisation and confluence then beginning with a proof $\pi$ we can apply arbitrarily chosen reductions and be assured that eventually this process terminates, and that the result is independent of all choices: that is, the result is a \emph{normal form} of $\pi$.

Unfortunately in its sequent calculus presentation linear logic does \emph{not} satisfy confluence and in particular it is not true that each cut-elimination equivalence class contains a unique cut-free proof \cite[\S 1.3.1]{girard_prooftypes}. However to some extent this is a ``mere'' technical problem, and is usually taken as evidence that the right presentation of linear logic is not via the (overly bureaucratic) sequent calculus but through \emph{proof-nets}. In Girard's original paper \cite[III.3]{girard_llogic} it is proven that cut-elimination for proof-nets satisfies strong normalisation in the current setting of first-order linear logic; regarding second-order linear logic and confluence the story is surprisingly intricate, see \cite{pagani}. Note also that the Natural Deduction presentation of linear logic is very close to the sequent calculus and satisfies both strong normalisation and confluence; see \cite{benton_strong} and \cite[\S 2.2]{brauner}.
\end{remark}

\subsection{An extended example}

To demonstrate cut-elimination in a nontrivial example, let us prove that $2 \times 2 = 4$.

\begin{definition}\label{definition:mult_m} Let $m \ge 0$ be an integer and $A$ a formula. We define $\prf{\mathrm{mult}}_A(m, -)$ to be the proof (writing $E = A \multimap A$)
\begin{equation}\label{cut_mult_2_0}
\begin{mathprooftree}
\AxiomC{$\church{m}_A$}
\noLine\UnaryInfC{$\vdots$}
\def\extraVskip{5pt}
\noLine\UnaryInfC{${!}E \vdash E$}
\def\extraVskip{2pt}
\RightLabel{\scriptsize prom}
\UnaryInfC{${!}E \vdash {!}E$}
\AxiomC{}
\UnaryInfC{$E \vdash E$}
\RightLabel{\scriptsize$\multimap L$}
\BinaryInfC{${!} E, \inta_A \vdash E$}
\RightLabel{\scriptsize$\multimap R$}
\UnaryInfC{$\inta_A \vdash \inta_A$}
\end{mathprooftree}
\end{equation}
\end{definition}

This proof represents multiplication by $m$, in the following sense.

\begin{lemma} The proof obtained from $\prf{\mathrm{mult}}_A(m,-)$ by cutting against $\church{n}_A$
\begin{equation}\label{cut_mult_prooftree}
\begin{mathprooftree}
\AxiomC{$\church{n}_A$}
\noLine\UnaryInfC{$\vdots$}
\def\extraVskip{5pt}
\noLine\UnaryInfC{$\vdash \inta_A$}
\def\extraVskip{2pt}
\AxiomC{$\prf{\mathrm{mult}}_A(m,-)$}
\noLine\UnaryInfC{$\vdots$}
\def\extraVskip{5pt}
\noLine\UnaryInfC{$\inta_A \vdash \inta_A$}
\def\extraVskip{2pt}
\RightLabel{\scriptsize cut}
\BinaryInfC{$\vdash \inta_A$}
\end{mathprooftree}
\end{equation}
is equivalent under cut-elimination to $\church{mn}_A$.
\end{lemma}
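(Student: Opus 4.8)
The plan is to compute the cut-elimination of \eqref{cut_mult_prooftree} directly in the sequent calculus; we cannot argue semantically, since equality of denotations is necessary but \emph{not} sufficient for equivalence under cut-elimination. Write $E = A \multimap A$, so that $\inta_A = {!}E \multimap E$, and recall the convention that $\church{k}_A$ also names the proof of ${!}E \vdash E$ obtained by stopping at the penultimate line. Inspecting Definition \ref{definition:mult_m}, the cut formula $\inta_A$ on the left of the conclusion of $\prf{\mathrm{mult}}_A(m,-)$ is the principal formula of a $\multimap L$ rule, whereas the \emph{last} rule of $\prf{\mathrm{mult}}_A(m,-)$ is the $\multimap R$ that forms the copy of $\inta_A$ on the right. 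So the cut formula is not principal on the right, and the first moves must be commutative.

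First I would disentangle the outer cut by three standard reductions. Since $\inta_A$ sits in the context of the final $\multimap R$ of $\prf{\mathrm{mult}}_A(m,-)$, the commutative conversion of Example \ref{example:cut_elim_examples2} pushes the cut above this $\multimap R$. The cut is now against the $\multimap L$ that introduces $\inta_A = {!}E \multimap E$, while on the other side $\church{n}_A$ ends in its own $\multimap R$ introducing $\inta_A$; this is exactly the principal $\multimap$-cut of Example \ref{example:cut_elim_examples}, which replaces it by two cuts. One of these is a cut against the axiom $E \vdash E$ and is removed by Example \ref{example:cut_elim_examples3}. What survives, underneath the reinstated $\multimap R$, is precisely the cut of $\operatorname{prom}(\church{m}_A) : {!}E \vdash {!}E$ against $\church{n}_A : {!}E \vdash E$ along the formula ${!}E$. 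Because $=_{cut}$ is a congruence for rules applied below a subproof, it therefore suffices to show that this cut is equivalent under cut-elimination to the proof $\church{mn}_A$ of ${!}E \vdash E$.

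The heart of the argument is thus the generalisation of Examples \ref{example:cutagainst2} and \ref{example:2_promotion} from $n=2$ to arbitrary $n$, and with the added wrinkle that here the promotion box carries a nonempty context ${!}E$. I would push the cut up through the structural rules that $\church{n}_A$ applies to its hypothesis, using the promotion rewrites \cite[\S 3.9.3, \S 3.9.1]{mellies}: each contraction duplicates the promotion box and reproduces a contraction on the surviving input ${!}E$; each dereliction opens a box and deposits a copy of $\church{m}_A : {!}E \vdash E$ at that leaf; and (when $n=0$) a weakening discards the box, leaving a weakening on ${!}E$. After all boxes are resolved, the linear skeleton of $\church{n}_A$ composes the $n$ resulting copies of $\church{m}_A$ in sequence, each supplied with a copy of the shared input, so that the nonlinear reading is $f \mapsto (f^{m})^{n} = f^{mn}$.

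The main obstacle is to identify this reduced proof with $\church{mn}_A$ \emph{on the nose}, up to $=_{cut}$: the composite produces a contraction tree that branches first $n$ times and then $m$ times, whereas $\church{mn}_A$ contracts its input into $mn$ copies along one canonical tree. I would settle this by induction on $n$. The cases $n=0$ (weakening, giving $\church{0}_A = \church{m\cdot 0}_A$) and $n=1$ (a single dereliction, giving $\church{m}_A = \church{m\cdot 1}_A$) follow immediately from the two promotion rewrites. For the inductive step I would use the recursive presentation of $\church{n+1}_A$ as $\church{n}_A$ extended by one dereliction, one contraction and one composition, reducing the claim to the ``addition'' identity that composing the ${!}E \vdash E$ proofs of $\church{mn}_A$ and $\church{m}_A$ over a shared, contracted input yields $\church{mn+m}_A = \church{m(n+1)}_A$. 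The genuinely delicate point is ensuring that the attendant rearrangement of contraction trees is produced by $\rightsquigarrow$ itself --- i.e.\ that the coassociativity and cocommutativity of the exponential, together with associativity of the composition encoded by the $\multimap L$ skeleton, are realised by cut-elimination equivalences and not merely by equality of denotations. This is exactly where the prototype computation of Example \ref{example:2_promotion} must be invoked with care.
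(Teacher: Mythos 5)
Your opening reductions are exactly the ones the paper performs in its worked example: the commutative conversion of Example \ref{example:cut_elim_examples2} (\cite[\S 3.11.10]{mellies}) is the step \eqref{cut_step_1} $\rightsquigarrow$ \eqref{cut_step_2} of Appendix \ref{section:appendix_cut_elim}, the principal $\multimap$-cut of Example \ref{example:cut_elim_examples} together with the axiom cut of Example \ref{example:cut_elim_examples3} is \eqref{cut_step_3} $\rightsquigarrow$ \eqref{cut_step_4}, and your box-duplication and box-opening steps are the appeals there to \cite[\S 3.9.3]{mellies} and \cite[\S 3.9.1]{mellies}. Keep in mind, however, what you are comparing against: the paper explicitly declines to prove the lemma (``Rather than give a complete proof of this lemma, we examine the special case where $m=n=2$''), and even in that special case it concludes only that the normal form and $\church{4}_A$ have the same string diagram, i.e.\ agreement ``at least at the level of the denotations''. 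Your plan, which demands genuine $=_{cut}$-equivalence for all $m,n$, therefore attempts strictly more than the text establishes; your remark that equality of denotations is necessary but not sufficient is precisely the point on which the paper itself hedges.

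The gap in your proposal is the one you flag, and it is not closed by the tool you reach for. After all boxes are resolved, the shape of the resulting cut-free proof is forced by the rewrites: each contraction of $\church{n}_A$ leaves a residual contraction on the box contexts (\cite[\S 3.9.3]{mellies}), and each opened box deposits a copy of $\church{m}_A$ with its internal contraction tree intact, so the normal form contracts its $mn$ derelicted hypotheses along the tree of $\church{n}_A$ grafted with copies of the tree of $\church{m}_A$ --- for $m=n=2$ a balanced tree of three contractions, whereas $\church{4}_A$ uses the comb of three successive contractions. These are distinct cut-free proofs, and cut-free proofs admit no further $\rightsquigarrow$-steps, so relating them under $=_{cut}$ requires a zigzag through proofs containing cuts; neither your induction on $n$ (whose ``addition identity'' step composes $\church{mn}_A$ with $\church{m}_A$ over a contracted input and so reproduces exactly the same tree mismatch) nor any reordering of the listed rewrites produces such a zigzag. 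Invoking Example \ref{example:2_promotion} ``with care'' cannot repair this: that example computes the denotation of the promoted numeral, so it lives on precisely the semantic side your first paragraph rules out. The two honest ways to finish are (i) restate and prove the lemma for proof-nets, where contraction trees are identified up to associativity and commutativity and this computation is classical \cite[\S 5.3.2]{girard_llogic}, \cite{danos}, or (ii) weaken the conclusion to equality of denotations (equivalently, equivalence modulo permutations of structural rules), which is what the paper's diagrammatic computation \eqref{diagram_mult_2_2}--\eqref{diagram_mult_2_3} actually proves for $m=n=2$.
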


Rather than give a complete proof of this lemma, we examine the special case where $m = n = 2$. If we denote the proof in \eqref{cut_mult_prooftree} by $\prf{\mathrm{mult}}_A(2,-) \l \church{2}_A$ then its string diagram is
\begin{equation}\label{cut_mult_2}
\prf{\mathrm{mult}}_A(2,-) \l \church{2}_A \quad = \begin{tikzpicture}[scale=0.35,auto,inner sep=1mm,baseline=(current  bounding  box.center)]
\coordinate (topr) at (0,2);
\coordinate (comp) at (3,4);
\drawbang[out=90,in=180] (topr) to (comp);
\drawprom (0,-1) ellipse (3cm and 3cm);
\dernode (bottomr) at (0,-4) {};

\coordinate (o) at ($ (topr) + (1,-2) $);
\coordinate (ellipse_center) at ($ (o) - (1,1) $);
\coordinate (elbow) at ($ (o) - (3, 0.5) $);
\coordinate (top_of_circle) at ($ (o) + (-1,2) $);
\draw[out=90,in=270] (o) to (top_of_circle);
\drawprom (ellipse_center) ellipse (3cm and 3cm); 
\dernode (bottomr) at ($ (o) - (1,4) $) {};
\dernode (R) at ($ (o) + (1, -1.5) $) {}; 
\dernode (L) at ($ (o) - (1,1.5) $) {}; 
\coordinate (left_curve) at ($ (o) - (2.5,2) $);
\coordinate (left_meet) at ($ (o) - (1.5, 0.9) $);
\coordinate (delta) at ($ (o) - (0, 2.7) $); 
\draw[out=90,in=0] (R) to node [swap] {} (o);
\drawbang[out=0,in=270] (delta) to (R);
\drawbang[out=180,in=270] (delta) to (L);
\drawbang[out=90,in=270] (bottomr) to (delta);
\draw[out=90,in=180] (left_meet) to (o);
\draw[out=90,in=0] (L) to (left_meet);
\draw[out=0,in=180] (left_curve) to (left_meet);
\draw[out=180,in=270] (left_curve) to (elbow);
\draw[out=90,in=180] (elbow) to ($ (o)!.5!(top_of_circle) $);

\coordinate (curve) at (-3.9, -3);
\coordinate (meet) at (3,5.5);
\drawbang[out=300,in=270] (curve) to (bottomr);
\drawbang[out=120,in=180] (curve) to (meet);
\node (vtop) at (3,7) {$\inta_A$};
\draw (meet) to (vtop);
\draw (comp) to (meet);

\coordinate (2o) at ($ (topr) + (8,-2) $);

\coordinate (2left_meet) at ($ (2o) - (1.5, 0.9) $);
\draw[out=90,in=180] (2left_meet) to (2o);

\dernode (2R) at ($ (2o) + (1,-1.5) $) {};
\dernode (2L) at ($ (2o) + (-1,-1.5) $) {};
\coordinate (2delta) at ($ (2o) - (0,2.7) $);
\draw[out=90,in=0] (2R) to (2o);
\drawbang[out=0,in=270] (2delta) to (2R);
\drawbang[out=180,in=270] (2delta) to (2L);

\coordinate (2left_curve) at ($ (2o) - (2.5, 2) $);
\coordinate (2left_curve_mid) at ($ (2o) - (3,1.25) $);
\coordinate (2first_meeting_top) at ($ (2o) + (0,0.75) $);
\draw[out=90,in=0] (2L) to (2left_meet);
\draw[out=0,in=180] (2left_curve) to (2left_meet);
\draw[out=180,in=270] (2left_curve) to (2left_curve_mid);

\coordinate (2curve_bottom) at ($ (2left_meet) - (0,2.5) $);
\coordinate (2curve_left) at ($ (2o) - (3.75, 1) $);
\drawbang[out=0,in=270] (2curve_bottom) to (2delta);
\drawbang[out=180,in=270] (2curve_bottom) to (2curve_left);

\coordinate (2top) at ($ (2o) + (0,1.5) $);
\draw[out=90,in=0] (2top) to (comp);
\draw (2o) to (2top);
\drawbang[out=90,in=180] (2curve_left) to (2top);
\draw[out=90,in=180] (2left_curve_mid) to ($ (2o)!.5!(2top) $);
\end{tikzpicture}
\end{equation}
To prove that the cut-free normalisation of $\prf{\mathrm{mult}}_A(2,-) \l \church{2}_A$ is the Church numeral $\church{4}_A$ we run through the proof transformations generated by the cut-elimination algorithm, each of which yields a new proof with the same denotation. This sequence of proofs represents the following sequence of manipulations of the string diagram:
\begin{itemize}
\item The first reduction \eqref{eq:cutelim1} $\rightsquigarrow$ \eqref{eq:cutelim2} applies naturality of the Hom-tensor adjunction to \eqref{cut_mult_2}. Then we are in the position of \eqref{eq:cutelim3}, with $\pi_2$ a part of $\church{2}_A$ and $\pi_1$ the promoted Church numeral and the reduction \eqref{eq:cutelim3} $\rightsquigarrow$ \eqref{eq:cutelim4} takes the left leg of the diagram and feeds it as an input to the right leg.
\item Next, we use that a promotion box represents a morphism of coalgebras, and thus can be commuted past the coproduct whereby it is duplicated.
\item Then the promotions cancel with the derelictions by the identity \eqref{eq:prom_cancel_der}, ``releasing'' the pair of Church numerals contained in the promotion boxes.
\item Finally, we may apply the general form of \eqref{eq:cutelim3} $\rightsquigarrow$ \eqref{eq:cutelim4}.
\end{itemize}
Each of these steps corresponds to one of the equalities in the following chain of diagrams:
\begin{equation}\label{diagram_mult_2_2}
\prf{\mathrm{mult}}_A(2,-) \l \church{2}_A \ = \ 
\begin{tikzpicture}[scale=0.35,auto,inner sep=1mm,baseline=(current  bounding  box.center)]
\coordinate (topr) at (0,2); 
\coordinate (curve) at (-4, -2.9);
\node (vtop) at ($ (topr) + (0,8) $) {};
\coordinate (meet) at ($ (vtop) - (0,2) $); 

\draw (meet) to (vtop);

\coordinate (2o) at ($ (topr) + (0,4) $);
\coordinate (2left_meet) at ($ (2o) - (1.5, 0.9) $);
\draw[out=90,in=180] (2left_meet) to (2o);

\dernode (2R) at ($ (2o) + (1,-1.5) $) {};
\dernode (2L) at ($ (2o) + (-1,-1.5) $) {};
\coordinate (2delta) at ($ (2o) - (0,2.7) $);
\draw[out=90,in=0] (2R) to (2o);
\drawbang[out=0,in=270] (2delta) to (2R);
\drawbang[out=180,in=270] (2delta) to (2L);

\coordinate (2left_curve) at ($ (2o) - (2.5, 2) $);
\coordinate (2left_curve_mid) at ($ (2o) - (3,1.25) $);
\coordinate (2first_meeting_top) at ($ (2o) + (0,0.75) $);
\draw[out=90,in=0] (2L) to (2left_meet);
\draw[out=0,in=180] (2left_curve) to (2left_meet);
\draw[out=180,in=270] (2left_curve) to (2left_curve_mid);
\draw[out=90,in=180] (2left_curve_mid) to (2first_meeting_top);
\draw (2o) to (meet);

\drawbang (topr) to (2delta);

\coordinate (o) at ($ (topr) + (1,-2) $);
\coordinate (ellipse_center) at ($ (o) - (1,1) $);
\coordinate (elbow) at ($ (o) - (3, 0.5) $);
\coordinate (top_of_circle) at ($ (o) + (-1,2) $);
\draw[out=90,in=270] (o) to (top_of_circle);
\drawprom (ellipse_center) ellipse (3cm and 3cm); 
\dernode (bottomr) at ($ (o) - (1,4) $) {};
\dernode (R) at ($ (o) + (1, -1.5) $) {}; 
\dernode (L) at ($ (o) - (1,1.5) $) {}; 
\coordinate (left_curve) at ($ (o) - (2.5,2) $);
\coordinate (left_meet) at ($ (o) - (1.5, 0.9) $);
\coordinate (delta) at ($ (o) - (0, 2.7) $); 
\draw[out=90,in=0] (R) to node [swap] {} (o);
\drawbang[out=0,in=270] (delta) to (R);
\drawbang[out=180,in=270] (delta) to (L);
\drawbang[out=90,in=270] (bottomr) to (delta);
\draw[out=90,in=180] (left_meet) to (o);
\draw[out=90,in=0] (L) to (left_meet);
\draw[out=0,in=180] (left_curve) to (left_meet);
\draw[out=180,in=270] (left_curve) to (elbow);
\draw[out=90,in=180] (elbow) to ($ (o)!.5!(top_of_circle) $);

\drawbang[out=300,in=270] (curve) to (bottomr);
\drawbang[out=120,in=180] (curve) to (meet);
\end{tikzpicture}
\quad = \quad
\begin{tikzpicture}[scale=0.4,auto,inner sep=1mm,baseline=(current  bounding  box.center)]
\coordinate (topr) at (0,2); 
\node (vtop) at ($ (topr) + (0,8) $) {};
\coordinate (meet) at ($ (vtop) - (0,2) $); 

\draw (meet) to (vtop);

\coordinate (2o) at ($ (topr) + (0,4) $);
\coordinate (2left_meet) at ($ (2o) - (1.5, 0.9) $);
\draw[out=90,in=180] (2left_meet) to (2o);

\dernode (2R) at ($ (2o) + (1,-1.5) $) {};
\dernode (2L) at ($ (2o) + (-1,-1.5) $) {};
\draw[out=90,in=0] (2R) to (2o);

\coordinate (2left_curve) at ($ (2o) - (2.5, 2) $);
\coordinate (2left_curve_mid) at ($ (2o) - (3,1.25) $);
\coordinate (2first_meeting_top) at ($ (2o) + (0,0.75) $);
\draw[out=90,in=0] (2L) to (2left_meet);
\draw[out=0,in=180] (2left_curve) to (2left_meet);
\draw[out=180,in=270] (2left_curve) to (2left_curve_mid);
\draw[out=90,in=180] (2left_curve_mid) to (2first_meeting_top);
\draw (2o) to (meet);

\drawbang[out=90,in=270] ($ (topr) + (-3.5,0) $) to (2L);
\drawbang[out=90,in=270] ($ (topr) + (3.5,0) $) to (2R);

\coordinate (o) at ($ (topr) + (-2.5,-2) $);
\coordinate (ellipse_center) at ($ (o) - (1,1) $);
\coordinate (elbow) at ($ (o) - (3, 0.5) $);
\coordinate (top_of_circle) at ($ (o) + (-1,2) $);
\draw[out=90,in=270] (o) to (top_of_circle);
\drawprom (ellipse_center) ellipse (3cm and 3cm); 
\dernode (bottomr) at ($ (o) - (1,4) $) {};
\dernode (R) at ($ (o) + (1, -1.5) $) {}; 
\dernode (L) at ($ (o) - (1,1.5) $) {}; 
\coordinate (left_curve) at ($ (o) - (2.5,2) $);
\coordinate (left_meet) at ($ (o) - (1.5, 0.9) $);
\coordinate (delta) at ($ (o) - (0, 2.7) $); 
\draw[out=90,in=0] (R) to node [swap] {} (o);
\drawbang[out=0,in=270] (delta) to (R);
\drawbang[out=180,in=270] (delta) to (L);
\drawbang[out=90,in=270] (bottomr) to (delta);
\draw[out=90,in=180] (left_meet) to (o);
\draw[out=90,in=0] (L) to (left_meet);
\draw[out=0,in=180] (left_curve) to (left_meet);
\draw[out=180,in=270] (left_curve) to (elbow);
\draw[out=90,in=180] (elbow) to ($ (o)!.5!(top_of_circle) $);

\coordinate (o) at ($ (topr) + (4.5,-2) $);
\coordinate (ellipse_center) at ($ (o) - (1,1) $);
\coordinate (elbow) at ($ (o) - (3, 0.5) $);
\coordinate (top_of_circle) at ($ (o) + (-1,2) $);
\draw[out=90,in=270] (o) to (top_of_circle);
\drawprom (ellipse_center) ellipse (3cm and 3cm); 
\dernode (right_bottomr) at ($ (o) - (1,4) $) {};
\dernode (R) at ($ (o) + (1, -1.5) $) {}; 
\dernode (L) at ($ (o) - (1,1.5) $) {}; 
\coordinate (left_curve) at ($ (o) - (2.5,2) $);
\coordinate (left_meet) at ($ (o) - (1.5, 0.9) $);
\coordinate (delta) at ($ (o) - (0, 2.7) $); 
\draw[out=90,in=0] (R) to node [swap] {} (o);
\drawbang[out=0,in=270] (delta) to (R);
\drawbang[out=180,in=270] (delta) to (L);
\drawbang[out=90,in=270] (right_bottomr) to (delta);
\draw[out=90,in=180] (left_meet) to (o);
\draw[out=90,in=0] (L) to (left_meet);
\draw[out=0,in=180] (left_curve) to (left_meet);
\draw[out=180,in=270] (left_curve) to (elbow);
\draw[out=90,in=180] (elbow) to ($ (o)!.5!(top_of_circle) $);

\coordinate (curve) at (-8, 0);
\coordinate (very_bottom_turn) at (-2,-7);
\coordinate (very_bottom_delta) at (0,-6);
\drawbang[out=270,in=180] (curve) to (very_bottom_turn);
\drawbang[out=90,in=180] (curve) to (meet);
\drawbang[out=0,in=270] (very_bottom_turn) to (very_bottom_delta);
\drawbang[out=0,in=270] (very_bottom_delta) to (right_bottomr);
\drawbang[out=180,in=270] (very_bottom_delta) to (bottomr);
\end{tikzpicture}
\end{equation}
\begin{equation}\label{diagram_mult_2_3}
= \quad
\begin{tikzpicture}[scale=0.3,auto,inner sep=1mm,baseline=(current  bounding  box.center)]
\coordinate (topr) at (0,2); 
\node (vtop) at ($ (topr) + (0,8) $) {};
\coordinate (meet) at ($ (vtop) - (0,2) $); 

\draw (meet) to (vtop);

\coordinate (2o) at ($ (topr) + (0,4) $);
\coordinate (2left_meet) at ($ (2o) - (1.5, 0.9) $);
\draw[out=90,in=180] (2left_meet) to (2o);

\coordinate (2R) at ($ (2o) + (1,-1.5) $);
\coordinate (2L) at ($ (2o) + (-1,-1.5) $);
\draw[out=90,in=0] (2R) to (2o);

\coordinate (2left_curve) at ($ (2o) - (2.5, 2) $);
\coordinate (2left_curve_mid) at ($ (2o) - (3,1.25) $);
\coordinate (2first_meeting_top) at ($ (2o) + (0,0.75) $);
\draw[out=90,in=0] (2L) to (2left_meet);
\draw[out=0,in=180] (2left_curve) to (2left_meet);
\draw[out=180,in=270] (2left_curve) to (2left_curve_mid);
\draw[out=90,in=180] (2left_curve_mid) to (2first_meeting_top);
\draw (2o) to (meet);

\draw[out=90,in=270] ($ (topr) + (-3.5,0) $) to (2L);
\draw[out=90,in=270] ($ (topr) + (3.5,0) $) to (2R);

\coordinate (o) at ($ (topr) + (-2.5,-2) $);
\coordinate (elbow) at ($ (o) - (3, 0.5) $);
\coordinate (top_of_circle) at ($ (o) + (-1,2) $);
\dernode (R) at ($ (o) + (1, -1.8) $) {}; 
\dernode (L) at ($ (o) - (1,1.8) $) {}; 
\coordinate (left_curve) at ($ (o) - (2.5,2) $);
\coordinate (left_meet) at ($ (o) - (1.5, 0.9) $);
\coordinate (delta) at ($ (o) - (0, 3) $); 
\draw[out=90,in=0] (R) to node [swap] {} (o);
\drawbang[out=0,in=270] (delta) to (R);
\drawbang[out=180,in=270] (delta) to (L);
\draw[out=90,in=180] (left_meet) to (o);
\draw[out=90,in=0] (L) to (left_meet);
\draw[out=0,in=180] (left_curve) to (left_meet);
\draw[out=180,in=270] (left_curve) to (elbow);
\draw[out=90,in=270] (o) to (top_of_circle);
\draw[out=90,in=180] (elbow) to ($ (o)!.5!(top_of_circle) $);

\coordinate (o) at ($ (topr) + (4.5,-2) $);
\coordinate (elbow) at ($ (o) - (3, 0.5) $);
\coordinate (top_of_circle) at ($ (o) + (-1,2) $);
\dernode (R) at ($ (o) + (1, -1.8) $) {}; 
\dernode (L) at ($ (o) - (1,1.8) $) {}; 
\coordinate (left_curve) at ($ (o) - (2.5,2) $);
\coordinate (left_meet) at ($ (o) - (1.5, 0.9) $);
\coordinate (right_delta) at ($ (o) - (0, 3) $); 
\draw[out=90,in=0] (R) to node [swap] {} (o);
\drawbang[out=0,in=270] (right_delta) to (R);
\drawbang[out=180,in=270] (right_delta) to (L);
\draw[out=90,in=180] (left_meet) to (o);
\draw[out=90,in=0] (L) to (left_meet);
\draw[out=0,in=180] (left_curve) to (left_meet);
\draw[out=180,in=270] (left_curve) to (elbow);
\draw[out=90,in=270] (o) to (top_of_circle);
\draw[out=90,in=180] (elbow) to ($ (o)!.5!(top_of_circle) $);

\coordinate (curve) at (-8, 0);
\coordinate (very_bottom_turn) at (-2,-7);
\coordinate (very_bottom_delta) at (1,-5);
\drawbang[out=270,in=180] (curve) to (very_bottom_turn);
\drawbang[out=90,in=180] (curve) to (meet);
\drawbang[out=0,in=270] (very_bottom_turn) to (very_bottom_delta);
\drawbang[out=0,in=270] (very_bottom_delta) to (right_delta);
\drawbang[out=180,in=270] (very_bottom_delta) to (delta);
\end{tikzpicture}
\quad = \quad
\begin{tikzpicture}[scale=0.3,auto,inner sep=1mm,baseline=(current  bounding  box.center)]
\coordinate (topr) at (0,2); 
\node (vtop) at ($ (topr) + (0,8) $) {};
\coordinate (meet) at ($ (vtop) - (0,2) $); 

\draw (meet) to (vtop);

\coordinate (o) at ($ (topr) + (-2.5,-2) $);
\coordinate (elbow) at ($ (o) - (3, 0.5) $);
\coordinate (top_of_circle) at ($ (o) + (0,1) $);
\dernode (R) at ($ (o) + (1, -1.8) $) {}; 
\dernode (L) at ($ (o) - (1,1.8) $) {}; 
\coordinate (left_curve) at ($ (o) - (2.5,2) $);
\coordinate (left_meet) at ($ (o) - (1.5, 0.9) $);
\coordinate (delta) at ($ (o) - (0, 3) $); 
\draw[out=90,in=0] (R) to node [swap] {} (o);
\drawbang[out=0,in=270] (delta) to (R);
\drawbang[out=180,in=270] (delta) to (L);
\draw[out=90,in=180] (left_meet) to (o);
\draw[out=90,in=0] (L) to (left_meet);
\draw[out=0,in=180] (left_curve) to (left_meet);
\draw[out=180,in=270] (left_curve) to (elbow);
\draw[out=90,in=270] (o) to (top_of_circle);
\draw[out=90,in=180] (elbow) to (top_of_circle);

\coordinate (o) at ($ (topr) + (2.5,3) $);
\coordinate (elbow) at ($ (o) - (3, 0.5) $);
\coordinate (right_top_of_circle) at ($ (o) + (-1,2) $);
\dernode (R) at ($ (o) + (1, -1.8) $) {}; 
\dernode (L) at ($ (o) - (1,1.8) $) {}; 
\coordinate (left_curve) at ($ (o) - (2.5,2) $);
\coordinate (left_meet) at ($ (o) - (1.5, 0.9) $);
\coordinate (right_delta) at ($ (o) - (0, 3) $); 
\draw[out=90,in=0] (R) to node [swap] {} (o);
\drawbang[out=0,in=270] (right_delta) to (R);
\drawbang[out=180,in=270] (right_delta) to (L);
\draw[out=90,in=180] (left_meet) to (o);
\draw[out=90,in=0] (L) to (left_meet);

\draw[out=90,in=180] (top_of_circle) to (left_meet);
\draw[out=90,in=270] (o) to (meet);

\coordinate (curve) at (-8, 0);
\coordinate (very_bottom_turn) at (-2,-7);
\coordinate (very_bottom_delta) at (0,-5);
\drawbang[out=270,in=180] (curve) to (very_bottom_turn);
\drawbang[out=90,in=180] (curve) to (meet);
\drawbang[out=0,in=270] (very_bottom_turn) to (very_bottom_delta);
\drawbang[out=0,in=270] (very_bottom_delta) to (right_delta);
\drawbang[out=180,in=270] (very_bottom_delta) to (delta);
\end{tikzpicture}
\end{equation}
This last diagram is the denotation of $\church{4}_A$, so we conclude that (at least at the level of the denotations) the output of the program $\prf{\mathrm{mult}}_A(2,-)$ on the input $\church{2}_A$ is $\church{4}_A$. The cut-elimination transformations corresponding to these diagram manipulations are given in Appendix \ref{section:appendix_cut_elim}.

\section{Second-order linear logic}\label{section:second}

A measure of the strength or expressiveness of a logic is the class of functions $\mathbb{N} \lto \mathbb{N}$ that can be encoded as proofs. In first-order intuitionistic linear logic $\ILL$ we have already seen how to encode multiplication by an integer as a proof (Definition \ref{definition:mult_m}) and addition will be addressed below (Example \ref{example:addition}) so we deduce that for any formula $A$, any polynomial function $f: \mathbb{N} \lto \mathbb{N}$ may be encoded as a proof of
\begin{equation}\label{eq:intatointa}
{!}\inta_A \vdash \inta_A\,.
\end{equation}
More precisely, for any polynomial function $f$ there exists a proof $F$ of the above sequent with the property that for any integer $n \ge 0$, the cut of $F$ against $\church{n}_A$ reduces under cut-elimination to $\underline{f(n)}_A$. This is close to being a complete list of the functions $f$ that can be encoded as proofs of the sequent \eqref{eq:intatointa}, which comes as a bit of a disappointment; see \cite{schwicht} and \cite{zakrzewski}. However, the good news is that more expressive power can be obtained if we allow ourselves to use proofs of $\inta_B \vdash \inta_A$ where $B$ is somehow constructed from $A$. For example, we will see below how to encode exponentials using $B = \inta_A$. 

A very expressive system can be obtained by allowing the base type $B$ to be computed from $A$ at ``run time'', that is, during cut-elimination. This is achieved by adding quantifiers to the language of $\ILL$, and the resulting logic is called \emph{second-order intuitionistic linear logic} $\ILL_2$. One indication of the expressiveness of this extension is that System F \cite[\S 11]{girard_prooftypes} otherwise known as the polymorphic $\lambda$-calculus, embeds into $\ILL_2$ \cite[\S 5.2]{girard_llogic}. These systems have a growing influence on practical programming languages: for example the language Haskell compiles internally to an extension of System F. 
\\


Our aim in this section is to begin by writing down some more arithmetic in $\ILL$, and then to show how iteration on higher types leads naturally to the need for quantifiers in order to properly represent towers of exponentials. The semantics of $\ILL_2$ is much less clear than the propositional case so we will not speak about it here, but see \cite{girard_systemf,seely_systemf, seely}.

The following examples are taken from \cite[\S 5.3.2]{girard_llogic} and \cite[\S 4]{danos}.

\begin{definition} We define $\prf{\mathrm{comp}}_A$ to be the proof 
\begin{center}
\AxiomC{}
\UnaryInfC{$A \vdash A$}
\AxiomC{}
\UnaryInfC{$A \vdash A$}
\AxiomC{}
\UnaryInfC{$A \vdash A$}
\RightLabel{\scriptsize$\multimap L$}
\BinaryInfC{$A, A \multimap A \vdash A$}
\RightLabel{\scriptsize$\multimap L$}
\BinaryInfC{$A, A \multimap A, A \multimap A \vdash A$}
\RightLabel{\scriptsize$\multimap R$}
\UnaryInfC{$A \multimap A, A \multimap A \vdash A \multimap A$}
\DisplayProof
\qquad
\tagarray{\label{proof_comp}}
\end{center}
As discussed in Remark \ref{example:church_2} the denotation of this proof is the function which \emph{composes} two endomorphisms of  $\den{A}$.
\end{definition}

\begin{example}[(Addition)]\label{example:addition} We define $\prf{\mathrm{add}}_A$ to be the proof (writing $E = A \multimap A$)
\begin{center}
\AxiomC{}
\UnaryInfC{${!}E \vdash {!}E$}
\AxiomC{}
\UnaryInfC{${!}E \vdash {!}E$}
\AxiomC{$\prf{\mathrm{comp}}_A$}
\noLine\UnaryInfC{$\vdots$}
\def\extraVskip{5pt}
\noLine\UnaryInfC{$E, E \vdash E$}
\RightLabel{\scriptsize$\multimap L$}
\BinaryInfC{${!}E, E, \inta_A \vdash E$}
\RightLabel{\scriptsize$\multimap L$}
\BinaryInfC{${!} E, {!} E, \inta_A, \inta_A \vdash E$}
\RightLabel{\scriptsize ctr}
\UnaryInfC{${!} E, \inta_A, \inta_A \vdash E$}
\RightLabel{\scriptsize$\multimap R$}
\UnaryInfC{$\inta_A, \inta_A \vdash \inta_A$}
\DisplayProof
\qquad
\tagarray{\label{add_prooftree}}
\end{center}
which encodes addition on $A$-integers in the following sense: if $\prf{\mathrm{add}}_A$ is cut against two proofs $\church{m}_A$ and $\church{n}_A$ the resulting proof is equivalent under cut-elimination to $\church{m+n}_A$. 

Let us call the result of the cut $\prf{\mathrm{add}}_A( m, n )$, which is a proof of $\inta_A$. One way to see that this reduces to $\underline{m+n}_A$ without laboriously performing cut-elimination by hand is to use a term calculus, as presented in for example \cite{abramsky, benton_etal} or \cite[\S 4]{danos}, to understand the computational content of the proof. Alternatively, with the vector space semantics in mind, one can understand the proof as follows: given a vacuum $\vacu_\alpha$ at an endomorphism $\alpha$ of $V = \den{A}$, the contraction step duplicates this to $\vacu_\alpha \otimes \vacu_\alpha$. The left hand copy of $\vacu_\alpha$ is fed into $\underline{m}_A$ yielding $\alpha^m$ and the right hand copy is fed into $\underline{n}_A$ yielding $\alpha^n$. Then the composition ``machine'' composes these outputs to yield $\alpha^{m+n}$.
\end{example}

To generate a hierarchy of increasingly complex proofs from addition and multiplication we employ \emph{iteration}.

\begin{example}[(Iteration)]\label{example:proof_iteration} Given a proof $\beta$ of $A$ and $\beta'$ of $A \multimap A$ we define $\prf{\mathrm{rec}}_A(\beta, \beta')$ to be the proof
\begin{center}
\AxiomC{$\beta'$}
\noLine\UnaryInfC{$\vdots$}
\def\extraVskip{5pt}
\noLine\UnaryInfC{$\vdash A \multimap A$}
\RightLabel{\scriptsize prom}
\UnaryInfC{$\vdash {!}(A \multimap A)$}
\def\extraVskip{2pt}
\AxiomC{$\beta$}
\noLine\UnaryInfC{$\vdots$}
\def\extraVskip{5pt}
\noLine\UnaryInfC{$\vdash A$}
\AxiomC{}
\UnaryInfC{$A \vdash A$}
\RightLabel{\scriptsize$\multimap L$}
\BinaryInfC{$A \multimap A \vdash A$}
\RightLabel{\scriptsize$\multimap L$}
\BinaryInfC{$\inta_A \vdash A$}
\DisplayProof
\qquad
\tagarray{\label{iteration_prooftree}}
\end{center}
Cut against $\church{n}_A$ this is equivalent under cut-elimination to the $n$th power of $\beta'$ applied to $\beta$, that is, $n$ copies of $\beta'$ cut against one another and then cut against $\beta$.
\end{example}

This game gets more interesting when we get to exponentials. From Definition \ref{definition:mult_m} we know how to define a proof $\prf{\mathrm{mult}}_A(m,-)$ whose cut against $\church{n}_A$ yields something equivalent under cut-elimination to $\church{mn}_A$. But how do we construct a proof which, when cut against $\church{n}_A$, yields a proof equivalent to $\church{m^n}_A$? Naturally we can iterate multiplication by $m$, but the catch is that this requires integers of type $\inta_{\inta_A}$, as we will see in the next example.

\begin{example}[(Exponentials)]\label{example:exponentials} We define
\[
\prf{\mathrm{exp}}_{A,m} = \prf{\mathrm{rec}}_{\inta_A}\big( \church{1}_{A}, \prf{\mathrm{mult}}_A(m,-) \big)\,.
\]
That is,
\begin{center}
\AxiomC{$\prf{\mathrm{mult}}_A(m,-)$}
\noLine\UnaryInfC{$\vdots$}
\def\extraVskip{5pt}
\noLine\UnaryInfC{$\vdash \inta_A \multimap \inta_A$}
\RightLabel{\scriptsize prom}
\UnaryInfC{$\vdash {!}(\inta_A \multimap \inta_A)$}
\def\extraVskip{2pt}
\AxiomC{$\church{1}_{A}$}
\noLine\UnaryInfC{$\vdots$}
\def\extraVskip{5pt}
\noLine\UnaryInfC{$\vdash \inta_A$}
\AxiomC{}
\UnaryInfC{$\inta_A \vdash \inta_A$}
\RightLabel{\scriptsize$\multimap L$}
\BinaryInfC{$\inta_A \multimap \inta_A \vdash \inta_A$}
\RightLabel{\scriptsize$\multimap L$}
\BinaryInfC{$\inta_{\inta_A} \vdash \inta_A$}
\DisplayProof
\qquad
\tagarray{\label{exp_prooftree}}
\end{center}
Cut against $\church{n}_{\inta_A}$ this yields the desired numeral $\church{m^n}_A$.
\end{example}

We begin to see the general pattern: to represent more complicated functions $\mathbb{N} \lto \mathbb{N}$ we need to use more complicated base types $B$ for the integers $\inta_B$ on the left hand side. At the next step, when we try to iterate exponentials, we see that it is hopeless to continue without introducing a way to parametrise over these base types, and this is the role of quantifiers in second-order logic.

To motivate the extension of linear logic to second-order, consider the iteration of the function $n \mapsto 2^n$ which yields a tower of exponentials of variable height. More precisely, $E: \mathbb{N} \lto \mathbb{N}$ is defined recursively by $E(0) = 1, E(n+1) = 2^{E(n)}$ so that
\begin{equation}\label{eq:tower_of_exps_E}
E(n) = 2^{2^{2^{\iddots^{2}}}}
\end{equation}
where the total number of occurrences of $2$ on the right hand side is $n$. To represent the function $E$ as a proof in linear logic we need to introduce integer types with iterated subscripts by the recursive definition
\[
\inta_A(0) = \inta_A, \qquad \inta_A(r+1) =\inta_{\inta_A(r)}\,.
\]
By iteration of the exponential we mean the cut of the following sequence:
\begin{center}
\begin{tabular}{ >{\centering}m{3cm} >{\centering}m{2cm} >{\centering}m{3cm} >{\centering}m{3cm} >{\centering}m{2cm}}
\AxiomC{$\prf{\mathrm{exp}}_{\inta_A(n-1),2}$}
\noLine\UnaryInfC{$\vdots$}
\def\extraVskip{5pt}
\noLine\UnaryInfC{$\inta_A(n+1) \vdash \inta_A(n)$}
\DisplayProof

&

$\cdots$

&

\AxiomC{$\prf{\mathrm{exp}}_{\inta_{A},2}$}
\noLine\UnaryInfC{$\vdots$}
\def\extraVskip{5pt}
\noLine\UnaryInfC{$\inta_{\inta_{\inta_A}} \vdash \inta_{\inta_A}$}
\DisplayProof

&

\AxiomC{$\prf{\mathrm{exp}}_{A,2}$}
\noLine\UnaryInfC{$\vdots$}
\def\extraVskip{5pt}
\noLine\UnaryInfC{$\inta_{\inta_A} \vdash \inta_A$}
\DisplayProof

&
\tagarray{\label{seriesofexponentials}}
\end{tabular}
\end{center}
This yields a proof of $\inta_A(n+1) \vdash \inta_A$ which, when cut against $\church{n}_{\inta_A(n)}$, yields a proof equivalent under cut-elimination to $\church{E(n)}_A$. However this proof is constructed ``by hand'' for each integer $n$. It is clear that what we are doing in essence is iterating the exponential function, but we cannot express this formally in the language of propositional linear logic because the base type on our integers changes from iteration to iteration. To resolve this dilemma we will have to enrich the language by adding quantifiers, after which we will return in Lemma \ref{lemma:hypexp} to the problem of encoding towers of exponentials as proofs.

\begin{definition}[(Second-order linear logic)] The formulas of \emph{second-order linear logic} are defined recursively as follows: any formula of propositional linear logic is a formula, and if $A$ is a formula then so is $\forall x \,.\, A$ for any propositional variable $x$. There are two new deduction rules:
\begin{center}
\begin{tabular}{ >{\centering}m{4cm} >{\centering}m{4cm} }
\AxiomC{$\Gamma \vdash A$}
\RightLabel{\scriptsize$\forall R$}
\UnaryInfC{$\Gamma \vdash \forall x \,.\, A$}
\DisplayProof

&

\AxiomC{$\Gamma, A[B/x] \vdash C$}
\RightLabel{\scriptsize$\forall L$}
\UnaryInfC{$\Gamma, \forall x \,.\, A \vdash C$}
\DisplayProof
\end{tabular}
\end{center}
where $\Gamma$ is a sequence of formulas, possibly empty, and in the right introduction rule we require that $x$ is not free in any formula of $\Gamma$. Here $A[B/x]$ means a formula $A$ with all free occurrences of $x$ replaced by a formula $B$ (as usual, there is some chicanery necessary to avoid free variables in $B$ being captured, but we ignore this).
\end{definition}

Intuitively, the right introduction rule takes a proof $\pi$ and ``exposes'' the variable $x$, for which any type may be substituted. The left introduction rule, dually, takes a formula $B$ in some proof and binds it to a variable $x$. The result of cutting a right introduction rule against a left introduction rule is that the formula $B$ will be bound to $x$ throughout the proof $\pi$. That is, cut-elimination transforms
\begin{center}
\AxiomC{$\pi$}
\noLine\UnaryInfC{$\vdots$}
\def\extraVskip{5pt}
\noLine\UnaryInfC{$\Gamma \vdash A$}
\RightLabel{\scriptsize $\forall R$}
\UnaryInfC{$\Gamma \vdash \forall x \,.\, A$}
\def\extraVskip{2pt}
\AxiomC{$\rho$}
\noLine\UnaryInfC{$\vdots$}
\def\extraVskip{5pt}
\noLine\UnaryInfC{$\Delta, A[B/x] \vdash C$}
\RightLabel{\scriptsize $\forall L$}
\UnaryInfC{$\Delta, \forall x \,.\, A \vdash C$}
\RightLabel{\scriptsize cut}
\BinaryInfC{$\Gamma, \Delta \vdash C$}
\DisplayProof
\qquad
\tagarray{\label{quantifier_before}}
\end{center}
to the proof
\begin{center}
\AxiomC{$\pi[B/x]$}
\noLine\UnaryInfC{$\vdots$}
\def\extraVskip{5pt}
\noLine\UnaryInfC{$\Gamma \vdash A[B/x]$}
\def\extraVskip{2pt}
\AxiomC{$\rho$}
\noLine\UnaryInfC{$\vdots$}
\def\extraVskip{5pt}
\noLine\UnaryInfC{$\Delta, A[B/x] \vdash C$}
\RightLabel{\scriptsize cut}
\BinaryInfC{$\Gamma, \Delta \vdash C$}
\DisplayProof
\qquad
\tagarray{\label{quantifier_after}}
\end{center}
where $\pi[B/x]$ denotes the result of replacing all occurrences of $x$ in the proof $\pi$ with $B$. In the remainder of this section we provide a taste of just what an \emph{explosion} of possibilities the addition of quantifiers to the language represents. 

\begin{example}[(Integers)] The type of integers is
\[
\inta = \forall x \,.\, {!}(x \multimap x) \multimap (x \multimap x)\,.
\]
For each integer $n \ge 0$ we define $\church{n}$ to be the proof
\begin{center}
\AxiomC{$\church{n}_x$}
\noLine\UnaryInfC{$\vdots$}
\def\extraVskip{5pt}
\noLine\UnaryInfC{$\vdash \inta_x$}
\RightLabel{\scriptsize$\forall R$}
\UnaryInfC{$\vdash \inta$}
\DisplayProof
\end{center}
\end{example}


\begin{example}[(Exponentials)] We define $\prf{\mathrm{exp}}_m$ to be
\begin{center}
\AxiomC{$\prf{\mathrm{exp}}_{x,m}$}
\noLine\UnaryInfC{$\vdots$}
\def\extraVskip{5pt}
\noLine\UnaryInfC{$\inta_{\inta_x} \vdash \inta_x$}
\RightLabel{\scriptsize$\forall L$}
\UnaryInfC{$\inta \vdash \inta_x$}
\RightLabel{\scriptsize$\forall R$}
\UnaryInfC{$\inta \vdash \inta$}
\DisplayProof
\end{center}
\end{example}

\begin{example}[(Hyper-exponentials)] We define $\prf{\mathrm{hypexp}}$ to be
\begin{center}
\AxiomC{$\prf{\mathrm{exp}}_2$}
\noLine\UnaryInfC{$\vdots$}
\def\extraVskip{5pt}
\noLine\UnaryInfC{$\inta \vdash \inta$}
\RightLabel{\scriptsize $\multimap R$}
\UnaryInfC{$\vdash \inta \multimap \inta$}
\RightLabel{\scriptsize prom}
\UnaryInfC{$\vdash {!}(\inta \multimap \inta)$}
\def\extraVskip{2pt}
\AxiomC{$\church{1}$}
\noLine\UnaryInfC{$\vdots$}
\def\extraVskip{5pt}
\noLine\UnaryInfC{$\vdash \inta$}
\AxiomC{}
\UnaryInfC{$\inta \vdash \inta$}
\RightLabel{\scriptsize$\multimap L$}
\BinaryInfC{$\inta \multimap \inta \vdash \inta$}
\RightLabel{\scriptsize$\multimap L$}
\BinaryInfC{$\inta_{\inta} \vdash \inta$}
\RightLabel{\scriptsize $\forall L$}
\UnaryInfC{$\inta \vdash \inta$}
\DisplayProof
\qquad
\tagarray{\label{hyperexp_prooftree}}
\end{center}
\end{example}

\begin{lemma}\label{lemma:hypexp} The cut of $\church{n}$ against $\prf{\mathrm{hypexp}}$ reduces to $\church{E(n)}$.
\end{lemma}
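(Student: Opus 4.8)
The plan is to peel off the two quantifier rules bracketing $\prf{\mathrm{hypexp}}$, recognise the remainder as an instance of the iteration construction of Example \ref{example:proof_iteration}, and then induct on $n$ using the fact that $\prf{\mathrm{exp}}_2$ realises the map $k \mapsto 2^k$ on numerals. Write $R = \prf{\mathrm{rec}}_{\inta}(\church{1}, \prf{\mathrm{exp}}_2)$ for the proof of $\inta_{\inta} \vdash \inta$ sitting above the final $\forall L$ in \eqref{hyperexp_prooftree}, so that $\prf{\mathrm{hypexp}}$ is $R$ followed by the rule $\forall L$ which instantiates the bound variable of $\inta = \forall y \,.\, \inta_y$ at $B = \inta$ (turning $\inta_{\inta} \vdash \inta$ into $\inta \vdash \inta$).

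First I would reduce the cut of $\church{n}$ against $\prf{\mathrm{hypexp}}$. Here $\church{n}$ ends in a $\forall R$ and the cut formula $\inta$ is exactly the formula introduced by the final $\forall L$ of $\prf{\mathrm{hypexp}}$, so this is a principal quantifier cut and the transformation \eqref{quantifier_before} $\rightsquigarrow$ \eqref{quantifier_after} applies: it substitutes $B = \inta$ for the bound variable throughout the subproof of $\church{n}$, leaving the cut of $\church{n}_{\inta}$ (a proof of $\vdash \inta_{\inta}$) against $R$. By the iteration principle of Example \ref{example:proof_iteration}, with base type $\inta$, base point $\church{1}$ and step $\prf{\mathrm{exp}}_2$, this cut is equivalent under cut-elimination to the $n$-fold composite of $\prf{\mathrm{exp}}_2$ applied to $\church{1}$, that is, $n$ copies of $\prf{\mathrm{exp}}_2$ cut against one another and the result cut against $\church{1}$.

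Next I would establish the key sub-claim that $\prf{\mathrm{exp}}_2$ cut against $\church{k}$ reduces to $\church{2^k}$ for every $k \ge 0$. Since $\prf{\mathrm{exp}}_2$ ends in $\forall R$ acting on the right-hand $\inta$ while the cut is on the left-hand $\inta$, I would first commute the cut upward past this $\forall R$; the cut then meets the $\forall L$ of $\prf{\mathrm{exp}}_2$, which instantiates at $B = \inta_x$, and \eqref{quantifier_before} $\rightsquigarrow$ \eqref{quantifier_after} converts the cut into $\church{k}_{\inta_x}$ cut against the first-order proof $\prf{\mathrm{exp}}_{x,2}$, all beneath the left-over $\forall R$. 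By Example \ref{example:exponentials} (with $A = x$) this inner cut reduces to $\church{2^k}_x$, and re-applying $\forall R$ yields $\church{2^k}$ as required.

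Finally I would assemble these pieces by induction on $n$. Using $E(0) = 1$ and $E(r+1) = 2^{E(r)}$, the base case is $\church{1} = \church{E(0)}$, and the inductive step feeds $\church{E(r)}$ into one more copy of $\prf{\mathrm{exp}}_2$, which by the sub-claim produces $\church{2^{E(r)}} = \church{E(r+1)}$; hence the $n$-fold composite applied to $\church{1}$ reduces to $\church{E(n)}$. The main obstacle is organisational rather than conceptual: one must justify reducing the iterated cuts from the innermost (base) cut outward, replacing each intermediate proof by the $=_{cut}$-equivalent numeral before feeding it to the next $\prf{\mathrm{exp}}_2$. This uses that $=_{cut}$ is a congruence for the formation of cuts together with the commuting conversions (as in \eqref{eq:cutelim3} $\rightsquigarrow$ \eqref{eq:cutelim4}) needed to reassociate the tower of cuts, and careful renaming of bound variables to avoid capture in each quantifier reduction.
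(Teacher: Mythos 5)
Your argument is correct, and its opening move---the principal quantifier reduction \eqref{quantifier_before} $\rightsquigarrow$ \eqref{quantifier_after}, leaving $\church{n}_{\inta}$ cut against the subproof of $\inta_{\inta} \vdash \inta$ sitting above the final $\forall L$---is exactly the paper's first step. After that, however, the two routes genuinely diverge. The paper (treating $n=2$, with general $n$ declared ``similar'') unfolds the iteration by hand: \eqref{eq:cutelim3} $\rightsquigarrow$ \eqref{eq:cutelim4} together with Example \ref{example:cutagainst2} reduce the relevant branch to the \emph{square} of $\prf{\mathrm{exp}}_2$, and then the two copies of $\prf{\mathrm{exp}}_2$ are made to interact with \emph{each other}: the $\forall R$ ending one copy meets the $\forall L$ opening the next, and the quantifier rewrite becomes the type substitution $y = \inta_x$, so that $\prf{\mathrm{hypexp}} \l \church{2}$ reduces to the hand-built chain \eqref{seriesofexponentials} of first-order proofs at the iterated types $\inta_A(r)$. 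You instead cite Example \ref{example:proof_iteration} to unfold the iteration uniformly in $n$, and then evaluate eagerly from the innermost cut outward via your sub-claim $\prf{\mathrm{exp}}_2 \l \church{k} \, =_{cut} \, \church{2^k}$ (commute the cut past $\forall R$, one quantifier reduction, then Example \ref{example:exponentials}), so that every intermediate object is a closed numeral $\church{E(r)}$ of type $\inta$ and induction on $n$ finishes the proof. The trade-off is this: your decomposition is modular and genuinely uniform in $n$, resting on the unproved assertions of Examples \ref{example:proof_iteration} and \ref{example:exponentials} (the paper's rests just as much on Example \ref{example:cutagainst2} and on the unproved claim following \eqref{seriesofexponentials}); the paper's route, by contrast, never evaluates intermediate numerals but instead exhibits the second-order proof as containing the whole tower \eqref{seriesofexponentials}, making visible the computation of base types ``at run time'' via $y = \inta_x$, which is the conceptual point of the section. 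Both arguments need the same commuting conversion past $\forall R$ and the congruence property of $=_{cut}$ under cut formation, which you are right to flag explicitly.
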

\begin{proof}
We sketch how $\prf{\mathrm{hypexp}} \l \church{2}$ reduces to the sequence of cuts in \eqref{seriesofexponentials}, since the argument for general $n$ is similar. The first reduction is of the cut of left and right introduction rules for the quantifier \eqref{quantifier_before} $\rightsquigarrow$ \eqref{quantifier_after} which leaves a cut of $\church{2}_{\inta}$ against the proof \eqref{hyperexp_prooftree} up to its penultimate step. The second reduction is \eqref{eq:cutelim3} $\rightsquigarrow$ \eqref{eq:cutelim4}, i.e. \cite[\S 3.8.2]{mellies}, to
\begin{center}
\AxiomC{$\prf{\mathrm{exp}}_2$}
\noLine\UnaryInfC{$\vdots$}
\def\extraVskip{5pt}
\noLine\UnaryInfC{$\inta \vdash \inta$}
\RightLabel{\scriptsize $\multimap R$}
\UnaryInfC{$\vdash \inta \multimap \inta$}
\RightLabel{\scriptsize prom}
\UnaryInfC{$\vdash {!}(\inta \multimap \inta)$}
\def\extraVskip{2pt}
\AxiomC{$\church{2}_{\inta}$}
\noLine\UnaryInfC{$\vdots$}
\def\extraVskip{5pt}
\noLine\UnaryInfC{${!}( \inta \multimap \inta) \vdash \inta \multimap \inta$}
\RightLabel{\scriptsize cut}
\BinaryInfC{$\vdash \inta \multimap \inta$}
\AxiomC{$\church{1}$}
\noLine\UnaryInfC{$\vdots$}
\def\extraVskip{5pt}
\noLine\UnaryInfC{$\vdash \inta$}
\AxiomC{}
\UnaryInfC{$\inta \vdash \inta$}
\RightLabel{\scriptsize$\multimap L$}
\BinaryInfC{$\inta \multimap \inta \vdash \inta$}
\RightLabel{\scriptsize cut}
\BinaryInfC{$\vdash \inta$}
\DisplayProof
\quad
\tagarray{\label{hyperexp_prooftree_hyp}}
\end{center}
The left hand branch of this proof is familiar from Example \ref{example:cutagainst2}, so we know it reduces to the square of $\prf{\mathrm{exp}}_2$ which we can write as
\begin{center}
\AxiomC{$\prf{\mathrm{exp}}_{y,2}$}
\noLine\UnaryInfC{$\vdots$}
\def\extraVskip{5pt}
\noLine\UnaryInfC{$\inta_{\inta_y} \vdash \inta_y$}
\RightLabel{\scriptsize$\forall L$}
\UnaryInfC{$\inta \vdash \inta_y$}
\RightLabel{\scriptsize$\forall R$}
\UnaryInfC{$\inta \vdash \inta$}
\def\extraVskip{2pt}
\AxiomC{$\prf{\mathrm{exp}}_{x,2}$}
\noLine\UnaryInfC{$\vdots$}
\def\extraVskip{5pt}
\noLine\UnaryInfC{$\inta_{\inta_x} \vdash \inta_x$}
\RightLabel{\scriptsize$\forall L$}
\UnaryInfC{$\inta \vdash \inta_x$}
\RightLabel{\scriptsize$\forall R$}
\UnaryInfC{$\inta \vdash \inta$}
\RightLabel{\scriptsize cut}
\BinaryInfC{$\inta \vdash \inta$}
\DisplayProof
\quad
\tagarray{\label{hyperexp_quant}}
\end{center}
followed by a right introduction rule to get a proof of $\vdash \inta \multimap \inta$. There is a cut-elimination transformation that allows us to commute the cut past the right introduction rule in the right branch. At that point the right introduction rule (in the left branch) is cut against the left introduction rule (in the right branch) and the rewrite \eqref{quantifier_before} $\rightsquigarrow$ \eqref{quantifier_after} transforms this to a substitution of $y = \inta_x$ in the left branch:
\begin{center}
\AxiomC{$\prf{\mathrm{exp}}_{\inta_x,2}$}
\noLine\UnaryInfC{$\vdots$}
\def\extraVskip{5pt}
\noLine\UnaryInfC{$\inta_{\inta_{\inta_x}} \vdash \inta_{\inta_x}$}
\RightLabel{\scriptsize$\forall L$}
\UnaryInfC{$\inta \vdash \inta_{\inta_x}$}
\def\extraVskip{2pt}
\AxiomC{$\prf{\mathrm{exp}}_{x,2}$}
\noLine\UnaryInfC{$\vdots$}
\def\extraVskip{5pt}
\noLine\UnaryInfC{$\inta_{\inta_x} \vdash \inta_x$}
\RightLabel{\scriptsize cut}
\BinaryInfC{$\inta \vdash \inta_x$}
\RightLabel{\scriptsize$\forall R$}
\UnaryInfC{$\inta \vdash \inta$}
\DisplayProof
\quad
\tagarray{\label{hyperexp_quant2}}
\end{center}
This is enough to show that the proof $\prf{\mathrm{hypexp}} \l \church{2}$ reduces to the first two cuts in \eqref{seriesofexponentials}.
\end{proof}

Once we have added quantifiers, the expressive power of the language is immense. We can for example easily iterate hyper-exponentials to obtain a tower of exponentials whose height is itself a tower of exponentials, and by iterating at the type $\inta \multimap \inta$ obtain monsters like the Ackermann function \cite[\S 6.D]{girard_blindspot}. More precisely:

\begin{theorem}[Girard] Any recursive function $\mathbb{N} \lto \mathbb{N}$ which is provably total in second-order Peano arithmetic can be encoded into second-order linear logic as a proof of the sequent $\inta \vdash \inta$.
\end{theorem}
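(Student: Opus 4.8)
The plan is to deduce this from the corresponding \emph{representation theorem} for System F, together with the embedding of System F into $\ILL_2$ cited above. Recall that in System F one has a polymorphic type of integers $\mathbf{Nat} = \forall x \,.\, (x \Rightarrow x) \Rightarrow (x \Rightarrow x)$ with Church numerals $\church{n}$, and a closed term $t$ of type $\mathbf{Nat} \Rightarrow \mathbf{Nat}$ is said to \emph{represent} $f: \mathbb{N} \lto \mathbb{N}$ if $(t \, \church{n})$ $\beta$-reduces to $\church{f(n)}$ for every $n$. Girard's theorem asserts that the functions so representable are precisely those provably total in second-order Peano arithmetic. The embedding of System F into $\ILL_2$ is built from Girard's translation of implication, $A \Rightarrow B \mapsto {!}A \multimap B$; it carries $\mathbf{Nat}$ to $\inta$, the numerals $\church{n}$ to the proofs $\church{n}$ of $\vdash \inta$, and $\beta$-reduction to cut-elimination. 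Hence a representing term $t$ produces a proof $F$ of $\inta \vdash \inta$ whose cut against $\church{n}$ reduces under $=_{cut}$ to $\church{f(n)}$, which is exactly the encoding claimed. It therefore suffices to prove the representation theorem for System F.

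For this I would first \textbf{reduce to the intuitionistic case}. The totality of a recursive $f$ is a $\Pi^0_2$ statement $\forall n \, \exists m \, R(n,m)$ with $R$ primitive recursive, hence decidable; applying the G\"odel--Gentzen negative translation, followed if necessary by Friedman's $A$-translation to remove the double negation guarding the existential, shows that if classical second-order arithmetic proves this sentence then so does its intuitionistic counterpart, second-order Heyting arithmetic $\mathrm{HA}_2$. The next step is \textbf{program extraction}: from an $\mathrm{HA}_2$-proof of $\forall n \, \exists m \, R(n,m)$ one reads off, via the realizability interpretation of $\mathrm{HA}_2$ in System F, a term $t$ of type $\mathbf{Nat} \Rightarrow \mathbf{Nat}$ together with the data witnessing that $t$ computes a correct $m$ from each $n$. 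This is the constructive content of the Curry--Howard correspondence at second order: the second-order quantifiers of the logic become type abstractions, and the witnessing information for the existential becomes the extracted numeral.

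The main obstacle is the \textbf{adequacy} of this extraction, namely that the extracted term genuinely normalises to $\church{f(n)}$ rather than merely typechecking. This is where Girard's method of \emph{reducibility candidates} enters: one interprets each type as a set of strongly normalising terms closed under the candidate conditions, proves by induction on typing derivations that every well-typed term lies in the candidate for its type (the ``fundamental lemma''), and thereby deduces strong normalisation for System F. Since the only normal inhabitants of $\mathbf{Nat}$ are the Church numerals, strong normalisation forces $(t \, \church{n})$ to reduce to the correct numeral, so that $t$ represents $f$. The impredicative interpretation of $\forall x$ is essential, and is precisely why the argument cannot be internalised in $\mathrm{PA}_2$ itself, reflecting G\"odel's theorem; setting this interpretation up correctly and matching the normal forms of $\mathbf{Nat}$ with the genuine integers is the delicate part. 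Transporting the completed argument along the Girard translation then yields the statement for $\ILL_2$.
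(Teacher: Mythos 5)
Your route is exactly the paper's: the paper's entire proof is a citation to Girard's representation theorem for System F (\emph{Proofs and Types} \S 15.2, whose content is precisely the negative-translation, extraction, and reducibility-candidates argument you sketch) together with the embedding of System F into $\ILL_2$ from \emph{Linear Logic} \S 5. The one small inaccuracy is that the translation $A \Rightarrow B \mapsto {!}A \multimap B$ sends $\mathbf{Nat}$ to $\forall x \,.\, {!}({!}x \multimap x) \multimap ({!}x \multimap x)$ rather than literally to $\inta$, so one still needs the (routine, and handled in the cited sources) interderivability of this type with $\inta$ compatibly with numerals and cut-elimination.
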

\begin{proof}
See \cite[\S 15.2]{girard_prooftypes} and \cite[\S 5.3.2]{girard_llogic}.
\end{proof}

\appendix

\section{Example of cut-elimination}\label{section:appendix_cut_elim}

We examine the beginning of the cut-elimination process applied to the proof \eqref{cut_mult_2}. Our reference for cut-elimination is Melli\`{e}s \cite[\S 3.3]{mellies}. Throughout a formula $A$ is fixed and we write $E = A \multimap A$ so that $\inta_A = {!}E \multimap E$. We encourage the reader to put the following series of proof trees side-by-side with the evolving diagrams in \eqref{cut_mult_2}-\eqref{diagram_mult_2_3} to see the correspondence between cut-elimination and diagram manipulation.

To begin, we expose the first layer of structure within $\prf{\mathrm{mult}}_A(2,-)$ to obtain
\begin{center}
\AxiomC{$\church{2}_A$}
\noLine\UnaryInfC{$\vdots$}
\def\extraVskip{5pt}
\noLine\UnaryInfC{$\vdash \inta_A$}
\def\extraVskip{2pt}
\AxiomC{$\prf{\mathrm{mult}}_A(2,-)$}
\noLine\UnaryInfC{$\vdots$}
\def\extraVskip{5pt}
\noLine\UnaryInfC{$!E, \inta_A \vdash E$}
\def\extraVskip{2pt}
\RightLabel{\scriptsize $\multimap R$}
\UnaryInfC{$\inta_A \vdash \inta_A$}
\RightLabel{\scriptsize cut}
\BinaryInfC{$\vdash \inta_A$}
\DisplayProof
\qquad
\tagarray{\label{cut_step_1}}
\end{center}
For a cut against a proof whose last deduction rule is a right introduction rule for $\multimap$, the cut elimination procedure \cite[\S 3.11.10]{mellies} prescribes that \eqref{cut_step_1} be transformed to
\begin{center}
\AxiomC{$\church{2}_A$}
\noLine\UnaryInfC{$\vdots$}
\def\extraVskip{5pt}
\noLine\UnaryInfC{$\vdash \inta_A$}
\def\extraVskip{2pt}
\AxiomC{$\prf{\mathrm{mult}}_A(2,-)$}
\noLine\UnaryInfC{$\vdots$}
\def\extraVskip{5pt}
\noLine\UnaryInfC{$!E, \inta_A \vdash E$}
\def\extraVskip{2pt}
\RightLabel{\scriptsize cut}
\BinaryInfC{$!E \vdash E$}
\RightLabel{\scriptsize $\multimap R$}
\UnaryInfC{$\vdash \inta_A$}
\DisplayProof
\qquad
\tagarray{\label{cut_step_2}}
\end{center}
If we fill in the content of $\prf{\mathrm{mult}}_A(2,-)$, this proof may be depicted as follows:
\begin{center}
\AxiomC{$\church{2}_A$}
\noLine\UnaryInfC{$\vdots$}
\def\extraVskip{5pt}
\noLine\UnaryInfC{$!E  \vdash E$}
\def\extraVskip{2pt}
\UnaryInfC{$\vdash \inta_A$}
\AxiomC{$\church{2}_A$}
\noLine\UnaryInfC{$\vdots$}
\def\extraVskip{5pt}
\noLine\UnaryInfC{$!E \vdash E$}
\def\extraVskip{2pt}
\RightLabel{\scriptsize prom}
\UnaryInfC{$!E \vdash {!}E$}
\AxiomC{}
\UnaryInfC{$E \vdash E$}
\RightLabel{\scriptsize$\multimap L$}
\BinaryInfC{$!E, \inta_A \vdash E$}
\RightLabel{\scriptsize cut}
\BinaryInfC{$!E \vdash E$}
\RightLabel{\scriptsize $\multimap R$}
\UnaryInfC{$\vdash \inta_A$}
\DisplayProof
\qquad
\tagarray{\label{cut_step_3}}
\end{center}
The next cut-elimination step \cite[\S 3.8.2]{mellies} transforms this proof to
\begin{center}
\AxiomC{$\church{2}_A$}
\noLine\UnaryInfC{$\vdots$}
\def\extraVskip{5pt}
\noLine\UnaryInfC{$!E \vdash E$}
\def\extraVskip{2pt}
\RightLabel{\scriptsize prom}
\UnaryInfC{$!E \vdash {!}E$}
\AxiomC{$\church{2}_A$}
\noLine\UnaryInfC{$\vdots$}
\def\extraVskip{5pt}
\noLine\UnaryInfC{$!E \vdash E$}
\def\extraVskip{2pt}
\RightLabel{\scriptsize cut}
\BinaryInfC{$!E \vdash E$}
\AxiomC{}
\UnaryInfC{$E \vdash E$}
\RightLabel{\scriptsize cut}
\BinaryInfC{$!E \vdash E$}
\RightLabel{\scriptsize $\multimap R$}
\UnaryInfC{$\vdash \inta_A$}
\DisplayProof
\quad
\tagarray{\label{cut_step_4}}
\end{center}
As may be expected, cutting against an axiom rule does nothing, so this is equivalent to
\begin{prooftree}
\AxiomC{$\church{2}_A$}
\noLine\UnaryInfC{$\vdots$}
\def\extraVskip{5pt}
\noLine\UnaryInfC{$!E \vdash E$}
\def\extraVskip{2pt}
\RightLabel{\scriptsize prom}
\UnaryInfC{$!E \vdash {!}E$}
\AxiomC{$\church{2}_A'$}
\noLine\UnaryInfC{$\vdots$}
\def\extraVskip{5pt}
\noLine\UnaryInfC{$!E, !E \vdash E$}
\def\extraVskip{2pt}
\RightLabel{\scriptsize ctr}
\UnaryInfC{$!E \vdash E$}
\RightLabel{\scriptsize cut}
\BinaryInfC{$!E \vdash E$}
\RightLabel{\scriptsize $\multimap R$}
\UnaryInfC{$\vdash \inta_A$}
\end{prooftree}
where $\church{2}_A'$ is a sub-proof of $\church{2}_A$. Here is the important step: cut-elimination replaces a cut of a promotion against a contraction by a pair of promotions \cite[\S 3.9.3]{mellies}. This step corresponds to the doubling of the promotion box in \eqref{diagram_mult_2_2}
\begin{prooftree}
\AxiomC{$\church{2}_A$}
\noLine\UnaryInfC{$\vdots$}
\def\extraVskip{5pt}
\noLine\UnaryInfC{$!E \vdash E$}
\def\extraVskip{2pt}
\UnaryInfC{$!E \vdash {!}E$}
\AxiomC{$\church{2}_A$}
\noLine\UnaryInfC{$\vdots$}
\def\extraVskip{5pt}
\noLine\UnaryInfC{$!E \vdash E$}
\def\extraVskip{2pt}
\UnaryInfC{$!E \vdash {!}E$}
\AxiomC{$\church{2}_A'$}
\noLine\UnaryInfC{$\vdots$}
\def\extraVskip{5pt}
\noLine\UnaryInfC{$!E, !E \vdash E$}
\def\extraVskip{2pt}
\RightLabel{\scriptsize cut}
\BinaryInfC{$!E, !E \vdash E$}
\RightLabel{\scriptsize cut}
\BinaryInfC{$!E, !E \vdash E$}
\RightLabel{\scriptsize ctr}
\UnaryInfC{$!E \vdash E$}
\RightLabel{\scriptsize $\multimap R$}
\UnaryInfC{$\vdash \inta_A$}
\end{prooftree}
We only sketch the rest of the cut-elimination process: next, the derelictions in $\church{2}_A'$ will be annihilate with the promotions in the two copies of $\church{2}_A$ according to \cite[\S 3.9.1]{mellies}. Then there are numerous eliminations involving the right and left $\multimap$ introduction rules. 

\section{Tangents and proofs}\label{section:example_lifting}




\begin{example}\label{example:tangent_coalgebra} Let $\cat{T}$ the coalgebra given by the dual of the finite-dimensional algebra $k[t]/(t^2)$. It has a $k$-basis $1 = 1^*$ and $\varepsilon = t^*$ and coproduct $\Delta$ and counit $u$ defined by
\[
\Delta(1) = 1 \otimes 1, \quad \Delta( \varepsilon ) = 1 \otimes \varepsilon + \varepsilon \otimes 1, \quad u(1) = 1, \quad u(\varepsilon) = 0\,.
\]
Recall that a tangent vector at a point $x$ on a scheme $X$ is a morphism $\Spec(k[t]/t^2) \lto X$ sending the closed point to $x$. Given a finite-dimensional vector space $V$ and $R = \Sym(V^*)$ with $X = \Spec(R)$, this is equivalent to a morphism of $k$-algebras
\[
\varphi: \Sym(V^*) \lto k[t]/t^2
\]
with $\varphi^{-1}( (t) ) = x$. Such a morphism of algebras is determined by its restriction to $V^*$, which as a linear map $\varphi|_{V^*}: V^* \lto k[t]/t^2$ corresponds to a pair of elements $(P, Q)$ of $V$, where $\varphi( \tau ) = \tau(P) \cdot 1 + \tau(Q) \cdot t$. Then $\varphi$ sends a polynomial $f$ to
\[
\varphi(f) = f(P) \cdot 1 + \partial_Q( f )|_P \cdot t\,.
\]
The map $\varphi|_{V^*}$ is also determined by its dual, which is a linear map $\phi: \cat{T} \lto V$. By the universal property, this lifts to a morphism of coalgebras $\Phi: \cat{T} \lto {!}V$. If $\phi$ is determined by a pair of points $(P,Q) \in V^{\oplus 2}$ as above, then it may checked directly that
\[
\Phi( 1 ) = \vacu_P, \qquad \Phi( \varepsilon ) = \ket{ Q }_P
\]
is a morphism of coalgebras lifting $\phi$.
\end{example}

Motivated by this example, we make a preliminary investigation into tangent vectors at proof denotations. Let $A,B$ be types with finite-dimensional denotations $\den{A}, \den{B}$.

\begin{definition} Given a proof $\pi$ of $\vdash A$ a \emph{tangent vector} at $\pi$ is a morphism of coalgebras $\theta: \cat{T} \lto {!} \den{A}$ with the property that $\theta(1) = \vacu_{\den{\pi}}$, or equivalently that the diagram
\begin{equation}
\xymatrix@C+2pc{
k \ar[d]_-{1} \ar[r]^-{\den{\pi}} & \den{A}\\
\cat{T} \ar[r]_-{\theta} & {!} \den{A} \ar[u]_-{d}
}
\end{equation}
commutes. The set of tangent vectors at $\pi$ is denoted $T_{\pi}$.
\end{definition}

It follows from Example \ref{example:tangent_coalgebra} that there is a bijection
\[
\den{A} \lto T_{\pi}
\]
sending $Q \in \den{A}$ to the coalgebra morphism $\theta$ with $\theta(1) = \vacu_{\den{\pi}}$ and $\theta(\varepsilon) = \ket{Q}_{\den{\pi}}$. We use this bijection to equip $T_{\pi}$ with the structure of a vector space.

Note that the denotation of a program not only maps inputs to outputs (if we identify inputs and outputs with vacuum vectors) but also tangent vectors to tangent vectors. To wit, if $\rho$ is a proof of a sequent $!A \vdash B$ with denotation $\lambda: {!} \den{A} \lto \den{B}$, then composing a tangent vector $\theta$ at a proof $\pi$ of $\vdash A$ with the lifting $\Lambda$ of $\lambda$ leads to a tangent vector at the cut of $\rho$ against the promotion of $\pi$. That is, the linear map
\begin{equation}\label{eq:fake_tangent_map}
\xymatrix@C+2pc{
\cat{T} \ar[r]^-{\theta} & ! \den{A} \ar[r]^-{\Lambda} & ! \den{B}
}
\end{equation}
is a tangent vector at the following proof, which we denote $\rho \l \pi$
\begin{prooftree}
\AxiomC{$\pi$}
\noLine\UnaryInfC{$\vdots$}
\def\extraVskip{5pt}
\noLine\UnaryInfC{$\vdash A$}
\def\extraVskip{2pt}
\RightLabel{\scriptsize prom}
\UnaryInfC{$\vdash {!} A$}
\AxiomC{$\rho$}
\noLine\UnaryInfC{$\vdots$}
\def\extraVskip{5pt}
\noLine\UnaryInfC{$!A \vdash B$}
\def\extraVskip{2pt}
\RightLabel{\scriptsize cut}
\BinaryInfC{$\vdash B$}
\end{prooftree}
By Theorem \ref{theorem:describe_lifting} the linear map of tangent spaces induced in this way by $\rho$ is
\begin{gather}
\den{A} \cong T_{\pi} \lto T_{\rho\l\pi} \cong \den{B}\label{eq:tangent_map_logic}\\
Q \longmapsto \lambda \ket{Q}_{\den{\pi}}\nonumber
\end{gather}

When $\rho$ computes a smooth map of differentiable manifolds, this map can be compared with an actual map of tangent spaces. We examine $\rho = \church{2}_A$ below. It would be interesting to understand these maps in more complicated examples; this seems to be related to the differential $\lambda$-calculus \cite{ehrhard_difflambda,ehrhard_difflambda2}, but we have not tried to work out the precise connection.

\begin{example}\label{example:tangent_to_2} When $k = \mathbb{C}$ and $Z = \den{\church{2}_A}_{nl}$ we have by Lemma \ref{lemma:nonlinear_recover}
\[
Z: M_n(\mathbb{C}) \lto M_n(\mathbb{C}), \qquad Z(\alpha) = \alpha^2\,.
\]
The tangent map of the smooth map of manifolds $Z$ at $\alpha \in M_n(\mathbb{C})$ is $(Z_*)_\alpha( \nu ) = \{ \nu, \alpha \}$. When $\alpha$ is the denotation of some proof $\pi$ of $\vdash A \multimap A$ this agrees with the tangent map assigned in \eqref{eq:tangent_map_logic} to the proof $\church{2}_A$ at $\pi$, using \eqref{eq:church_2_den}.
\end{example}

\bibliographystyle{amsalpha}
\providecommand{\bysame}{\leavevmode\hbox to3em{\hrulefill}\thinspace}
\providecommand{\href}[2]{#2}

\end{document}